\newtheorem*{ose}{Oselede\v{c}'s multiplicative ergodic theorem}
\newtheorem*{CMlem}{Lemma}
\newtheorem{thm}{Theorem}[section]
\newtheorem{lem}[thm]{Lemma}
\newtheorem{cor}[thm]{Corollary}
\newcommand{\lam}[1]{\lambda_}
\newcommand{\bea}{\begin{eqnarray}}
\newcommand{\eea}{\end{eqnarray}}
\theoremstyle{definition}%
\newtheorem{rem}{Remark}
\newtheorem{defn}[thm]{Definition}
\newtheorem{example}[thm]{Example}
\def\bdot{\boldsymbol\cdot}
\def\bA{{\boldsymbol{A}}}
\def\bS{{\boldsymbol{S}}}
\def\X{{\boldsymbol{X}}}
\def\bxi{{\boldsymbol\xi}}
\def\bsigma{{\boldsymbol\sigma}}
\def\P{{\boldsymbol{P}}}
\numberwithin{equation}{section}
\journal{Ergodic Theory and Dynamical Systems}
\begin{document}

\begin{frontmatter}
\title{Finer filtration for matrix-valued cocycle based on Oselede\v{c}'s multiplicative ergodic theorem\tnoteref{t1}}
%\tnotetext[t1]{This work was supported in part by National Science Foundation of China (Grant Nos. 11071112 and 11271183), PAPD of Jiangsu Higher Education Institutions.
%and in part by NSF 1021203 of the United States.}

\author{Xiongping Dai}
\ead{xpdai@nju.edu.cn}
%\author[Huang]{Yu Huang}
%\ead{stshyu@mail.sysu.edu.cn}
%\author[LX]{Mingqing Xiao}
%\ead{mxiao@math.siu.edu}

%%\cortext[cor1]{Corresponding author}

\address{Department of Mathematics, Nanjing University, Nanjing 210093, People's Republic of China}
%\address[Huang]{Department of Mathematics, Zhongshan (Sun Yat-Sen) University, Guangzhou 510275, People's Republic of China}
%\address[LX]{Department of Mathematics, Southern Illinois University, Carbondale, IL 62901-4408, USA}
%%\address[LX]{Department of Mathematics, Southern Illinois University, Carbondale, IL 62901-4408, USA}

%%%%%%%%%
\begin{abstract}
We consider a measurable matrix-valued cocycle $\bA\colon\mathbb{Z}_+\times X\rightarrow\mathbb{R}^{d\times d}$, driven by a measure-preserving transformation $T$ of a probability space $(X,\mathscr{F},\mu)$, with the integrability condition $\log^+\|\bA(1,\cdot)\|\in L^1(\mu)$. We show that for $\mu$-a.e. $x\in X$, if $\lim_{n\to\infty}\frac{1}{n}\log\|\bA(n,x)v\|=0$ for all $v\in\mathbb{R}^d\setminus\{0\}$, then the trajectory $\{\bA(n,x)v\}_{n=0}^\infty$ is far away from $0$ (i.e. $\limsup_{n\to\infty}\|\bA(n,x)v\|>0$) and there is some nonzero $v$ such that $\limsup_{n\to\infty}\|\bA(n,x)v\|\ge\|v\|$. This improves the classical multiplicative ergodic theorem of Oselede\v{c}. We here present an application to linear random processes to illustrate the importance.
\end{abstract}

\begin{keyword}
Multiplicative ergodic theorem\sep conditional stability of linear random process.

\medskip
\MSC[2010] Primary 37H15; Secondary 93D20.
\end{keyword}
\end{frontmatter}

%%%%%%%%%%%%%%%%%%%%%%%%%%%%%%%%%%%%%%%%%%%%%%%%%%%%%
\section{Introduction}\label{sec1}%
Throughout this paper, let $\mathbb{R}^{d\times d}$, $1\le d<\infty$, represent the space of all real $d\times d$ matrices, endowed with the matrix/operator norm $\pmb{\|}\cdot\pmb{\|}$ induced by an arbitrarily given (but not necessarily the usual Euclidean) vector norm $\|\cdot\|$ on $\mathbb{R}^d$. By $\mathbb{Z}_+$ we mean the set of all nonnegative integers. Let
$$T\colon X\rightarrow X$$
be a measure-preserving transformation of a probability space $(X,\mathscr{F},\mu)$, which is not necessarily ergodic. If $X$ is a Polish space and $\mathscr{F}$ is the Borel $\sigma$-field, then we call $(X,\mathscr{F},\mu)$ a Polish probability space. In this paper, we consider a measurable matrix-valued cocycle
\begin{equation}\label{eq1.1}%%%
\bA\colon \mathbb{Z}_+\times X\rightarrow\mathbb{R}^{d\times d},
\end{equation}
driven by $T$; that is, it holds the following cocycle property:
\begin{equation*}
\bA(0,x)=\mathrm{I}_{d\times d},\quad\bA(m+n,x)=\bA(n,T^m(x))\bA(m,x)\quad \forall x\in X\textrm{ and }m,n\in\mathbb{Z}_+.
\end{equation*}
Here and in the sequel, $\mathrm{I}_{d\times d}$ stands for the unit matrix in $\mathbb{R}^{d\times d}$, and we will identify $B\in\mathbb{R}^{d\times d}$ with its induced linear transformation $u\mapsto Bu$ from $\mathbb{R}^d$ into itself.
Write
\begin{equation*}
\log^+\pmb{\|}\bA(1,x)\pmb{\|}:=\max\left\{0,\log\pmb{\|}\bA(1,x)\pmb{\|}\right\}.
\end{equation*}
Then the classical Oselede\v{c} multiplicative ergodic theorem may be stated as:

\begin{ose}[\cite{Ose}]%%%%
If $\log^+\pmb{\|}\bA(1,\cdot)\pmb{\|}\in L^1(\mu)$, then there exists $B\in\mathscr{F}$ with $T(B)\subseteq B$ and $\mu(B)=1$ such that:
\begin{enumerate}
\item[$(\mathrm{a})$] There is a measurable function $s\colon B\rightarrow\mathbb{N}$ with $s\circ T=s$.

\item[$(\mathrm{b})$] If $x$ belongs to $B$ there are $s(x)$ numbers $-\infty=\lambda_1(x)<\lambda_2(x)<\dotsm<\lambda_{s(x)}^{}(x)<\infty$.

\item[$(\mathrm{c})$] There are measurable linear subspaces of $\mathbb{R}^d$:
\begin{equation*}
\varnothing=V^{(0)}(x)\subset V^{(1)}(x)\subset\dotsm\subset V^{(s(x))}(x)=\mathbb{R}^d\quad \forall x\in B.
\end{equation*}
\item[$(\mathrm{d})$] If $x\in B$ and $1\le i\le s(x)$ then for all $v\in V^{(i)}(x)\setminus V^{(i-1)}(x)$,
\begin{equation*}
\lim_{n\to\infty}\frac{1}{n}\log\|\bA(n,x)v\|=\lambda_i(x).
\end{equation*}
\item[$(\mathrm{e})$] The function $\lambda_i(x)$ is defined and measurable on $\{x\,|\,s(x)\ge i\}$ and $\lambda_i(T(x))=\lambda_i(x)$ on this set.

\item[$(\mathrm{f})$] $\bA(1,x)V^{(i)}(x)\subseteq V^{(i)}(T(x))$ if $i\le s(x)$.
\end{enumerate}
\end{ose}

We note here that in our statement of Oselede\v{c}'s theorem above, we see $-\infty$ as the smallest Lyapunov exponent of $\bA$ at $x$ for the zero vector. So $s(x)\le d+1$ and sometimes $V^{(1)}(x)=\{0\}$. In Oselede\v{c}'s original statement (\cite{Ose} also \cite[Theorem~10.2]{Wal82}), ones do not care the zero vector and then $V^{(0)}(x)=\{0\}$ for this case.

\subsection{Motivation}%%%
With the above Oselede\v{c} multiplicative ergodic theorem in mind, we will further consider the following two important questions:
\begin{itemize}
\item[$(1)$] Does it hold that for $2\le i\le s(x)$,
$$
\limsup_{n\to\infty}e^{-\lambda_i(x)n}\|\bA(n,x)v\|>0\quad \left(\textrm{or }\liminf_{n\to\infty}e^{-\lambda_i(x)n}\|\bA(n,x)v\|>0\right)
$$
for all $v\in V^{(i)}(x)\setminus V^{(i-1)}(x)$?

\item[$(2)$] Does it hold that
$$
\limsup_{n\to\infty}e^{-\lambda_i(x)n}\pmb{\|}\bA(n,x)|V^{(i)}(x)\pmb{\|}\ge1
$$
and 
$$
\limsup_{n\to\infty}e^{-\lambda_i(x)n}\|\bA(n,x)v\|\ge\|v\|\quad v\in V^{(i)}(x)\setminus V^{(i-1)}(x)
$$
for $2\le i\le s(x)$?
\end{itemize}
Even in the invertible case, the above two properties are already beyond the improved multiplicative ergodic theorem of Froyland, LLoyd and Quas \cite{FLQ}.

These properties (1) and (2) are important to the stabilizability problem of linear switching dynamical systems; see, e.g., \cite{Dai-JDE, DHX-IEEE} and an application presented later.

\subsection{Main statement}%%%

In this paper, our aim is to further employ the filtration given by Oselede\v{c}'s theorem. Our main theorem is stated as follows.

\begin{thm}[MET]\label{thm1.1}%%%
Let $T$ be a measure-preserving transformation of a probability space $(X,\mathscr{F},\mu)$ and $\bA\colon\mathbb{Z}_+\times X\rightarrow\mathbb{R}^{d\times d}$ measurable such that $\log^+\pmb{\|}\bA(1,\cdot)\pmb{\|}\in L^1(\mu)$. Then there exists a set $B\in\mathscr{F}$ with $T(B)\subseteq B$ and $\mu(B)=1$ such that:
\begin{enumerate}
\item[$(\mathrm{a})$] There is a measurable function $s\colon B\rightarrow\mathbb{N}$ with $s\circ T=s$.

\item[$(\mathrm{b})$] If $x$ belongs to $B$ there are $s(x)$ numbers $-\infty=\lambda_1(x)<\lambda_2(x)<\dotsm<\lambda_{s(x)}(x)<\infty$.

\item[$(\mathrm{c})$] There are measurable linear subspaces of $\mathbb{R}^d$:
\begin{equation*}
\varnothing=V^{(0)}(x)\subset V^{(1)}(x)\subset\dotsm\subset V^{(s(x))}(x)=\mathbb{R}^d\quad \forall x\in B.
\end{equation*}
\item[$(\mathrm{d})$] If $x$ belongs to $B$, then
\begin{enumerate}
\item[$(\mathrm{i})$] for $1\le i\le s(x)$,
\begin{equation*}
\lim_{n\to\infty}\frac{1}{n}\log\|\bA(n,x)v\|=\lambda_i(x)\quad \forall v\in V^{(i)}(x)\setminus V^{(i-1)}(x);
\end{equation*}
\item[$(\mathrm{ii})$] for $2\le i\le s(x)$,
\begin{gather*}
\limsup_{n\to\infty}e^{-\lambda_i(x)n}\|\bA(n,x)v\|>0\quad \forall v\in V^{(i)}(x)\setminus V^{(i-1)}(x)\\
\intertext{and}
\limsup_{n\to\infty}e^{-\lambda_i(x)n}\pmb{\|}\bA(n,x)|V^{(i)}(x)\pmb{\|}\ge1;
\end{gather*}
\item[$(\mathrm{iii})$] if we additionally let $T$ be measure-preserving on a Polish probability space $(X,\mathscr{F},\mu)$, then for $2\le i\le s(x)$ one can find some $v_i\in V^{(i)}(x)\setminus V^{(i-1)}(x)$ such that
\begin{equation*}
\limsup_{n\to\infty}e^{-\lambda_i(x)n}\|\bA(n,x)v_i\|\ge\|v_i\|.
\end{equation*}
\end{enumerate}

\item[$(\mathrm{e})$] The function $\lambda_i(x)$ is defined and measurable on $\{x\,|\,s(x)\ge i\}$ and $\lambda_i^{}(T(x))=\lambda_i(x)$ on this set.

\item[$(\mathrm{f})$] For any $x\in B$ and all $1\le i\le s(x)$,
\begin{enumerate}
\item[$(\mathrm{i})$] $\bA(1,x)V^{(i)}(x)\subseteq V^{(i)}(T(x))$ and
\item[$(\mathrm{ii})$] $\dim V^{(i)}(T(x))=\dim V^{(i)}(x)$.
\end{enumerate}
\end{enumerate}
\end{thm}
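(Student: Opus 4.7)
The plan is to build on the classical Oselede\v{c} theorem, which already supplies parts (a), (b), (c), (d)(i), (e), and (f)(i). The new dimension preservation (f)(ii) is then a short bookkeeping argument: classical Oselede\v{c} yields that each finite-exponent multiplicity $m_j(x) := \dim V^{(j)}(x) - \dim V^{(j-1)}(x)$ with $j \ge 2$ is $T$-invariant, and since $\sum_j m_j(x) = d$ at both $x$ and $Tx$, the multiplicity $m_1(x)$ of the $-\infty$ exponent is also $T$-invariant, so $\dim V^{(i)}(Tx) = \dim V^{(i)}(x)$ follows by summation. (A small check: any $v \in V^{(i)}(x) \cap \ker\bA(1,x)$ lies in $V^{(1)}(x)$, since $\bA(n,x)v = 0$ for $n \ge 1$ forces the exponent to be $-\infty$.)

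For (d)(ii) and (d)(iii) my strategy is to isolate a key lemma, which is exactly the claim of the paper's abstract: assuming only $\log^+\pmb{\|}\bA(1,\cdot)\pmb{\|}\in L^1(\mu)$ together with $\lim_{n\to\infty}\frac{1}{n}\log\|\bA(n,x)v\|=0$ for every $v \ne 0$ at $\mu$-a.e.\ $x$, one has (I) $\limsup_{n\to\infty}\|\bA(n,x)v\|>0$ for every such $v$ and $\mu$-a.e.\ $x$, and (II), in the Polish case, a measurable $v(x) \ne 0$ with $\limsup_{n\to\infty}\|\bA(n,x)v(x)\|\ge\|v(x)\|$ $\mu$-a.s. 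Given this lemma, fix $i \ge 2$, pass to the well-defined quotient cocycle $\bar\bA$ on the measurable subbundle $V^{(i)}(x)/V^{(i-1)}(x)$ (using (f)(i) and the $T$-invariance of $\lambda_i$), and normalize by $e^{-\lambda_i(x) n}$: the resulting cocycle has $0$ as its only Lyapunov exponent by (d)(i), so the key lemma applies. Lifting the quotient vector $\bar v$ back to a norm-preserving representative $v_i \in V^{(i)}(x) \setminus V^{(i-1)}(x)$ via a measurable selection then gives the nonvanishing assertion of (d)(ii) and the vector of (d)(iii); the operator-norm estimate $\limsup e^{-\lambda_i n}\pmb{\|}\bA(n,x)|V^{(i)}(x)\pmb{\|}\ge 1$ follows by taking the supremum over unit vectors and invoking a subadditive version of Atkinson's theorem on $\log\pmb{\|}\bA(n,x)|V^{(i)}(x)\pmb{\|} - \lambda_i(x)n$.

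To prove the key lemma itself I would work with the projective skew product $\hat T(x,[v]) = (Tx,[\bA(1,x)v])$ on $X \times \mathbb{P}^{d-1}$ together with the additive Birkhoff-type cocycle $S_n(x,[v]) = \log(\|\bA(n,x)v\|/\|v\|)$, whose generator is $L^1$ by hypothesis. For (I), the stable fibre $W(x) := \{v : \bA(n,x)v \to 0\}$ is measurable, linear, and $\bA$-forward-invariant; if $W(x)\ne\{0\}$ on a positive-measure set, a measurable selection would produce Birkhoff sums $S_n \to -\infty$ whose $1/n$-average is $0$, contradicting Atkinson's theorem on any $\hat T$-invariant measure projecting to $\mu$. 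For (II), the Polish hypothesis is used to produce a $\hat T$-invariant Borel probability measure $\hat\mu$ on the compact bundle $X \times S^{d-1}$ projecting to $\mu$ via a Krylov-Bogolyubov/Markov-Kakutani argument; then Atkinson gives $\limsup S_n \ge 0$ $\hat\mu$-a.s., and disintegrating $\hat\mu$ along $\mu$ together with a Kuratowski-Ryll-Nardzewski selection delivers the measurable $v(x)$.

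The main obstacle I anticipate is the selection and disintegration step in (II): one must pass from a fiberwise $\hat\mu$-a.s. inequality to a single measurable $v(x)$ without assuming $T$ is ergodic, and working uniformly across ergodic components is precisely what forces the Polish assumption. A secondary but technically cleaner issue is the subadditive Atkinson step required for the operator-norm assertion of (d)(ii), which I would handle by approximating $\pmb{\|}\bA(n,x)|V^{(i)}(x)\pmb{\|}$ from below by additive cocycles $\log(\|\bA(n,x)u(x)\|/\|u(x)\|)$ along carefully chosen measurable unit-vector selections $u$, thereby reducing to the additive Atkinson already used in the proof of (I).
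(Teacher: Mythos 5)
Your high-level strategy mirrors the paper's closely: both reduce (d)(ii) and (d)(iii) to a statement about cocycles whose Lyapunov exponents all vanish, both attack that statement via an Atkinson-type recurrence applied to Birkhoff sums over the projective/sphere skew product, and both need a measurability statement for the (sub-)stable direction and (f)(ii) to set up the reduction. However, there is one genuine flaw and a couple of under-argued points.

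The key problem is your assertion that the additive cocycle $S_n(x,[v])=\log\bigl(\|\bA(n,x)v\|/\|v\|\bigr)$ has an $L^1$ generator "by hypothesis." It does not. The hypothesis $\log^+\pmb{\|}\bA(1,\cdot)\pmb{\|}\in L^1(\mu)$ controls only $S_1^+$; the negative part of $S_1(x,[v])=\log\bigl(\|\bA(1,x)v\|/\|v\|\bigr)$ is governed by $\log\pmb{\|}\bA(1,x)^{-1}\pmb{\|}$ (or worse, $S_1$ can be $-\infty$ near $\ker\bA(1,x)$), and no integrability of that quantity is assumed. This is precisely the obstruction the paper flags (Remark~\ref{rem5} and the remark after Theorem~\ref{thm2.13}), and it is why classical Atkinson (Lemma~\ref{lem2.6}) cannot be invoked. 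The paper instead proves and uses a one-sided, subadditive variant (Theorem~\ref{thm2.7}, adapted from Climenhaga--Morris and from \cite{Dai-JDE}) whose hypothesis is only $f_1^+\in L^1(\mu)$ and which allows $f_n$ to take the value $-\infty$. Your proposal does mention a ``subadditive version of Atkinson'' for the operator-norm estimate of (d)(ii), but you then plan to reduce it to the additive case by approximating $\pmb{\|}\bA(n,x)|V^{(i)}(x)\pmb{\|}$ from below by additive cocycles along unit-vector selections; that reduction lands you right back at an additive generator with uncontrolled negative part, so it does not avoid the obstruction. You really do need a theorem of the shape of Theorem~\ref{thm2.7} as an independent ingredient, applied directly to the subadditive sequences $\log\pmb{\|}\bA(n,x)|V^s(x)\pmb{\|}$ (for part (I)) and to the Birkhoff sums of $\varphi^+$-integrable $\varphi$ on the sphere bundle (for part (II)).

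Two smaller gaps. First, for key lemma~(I) you reason by contradiction from ``a measurable selection'' out of the stable fibre $W(x)$, but a selected point $(x,[w(x)])$ is not a priori in the support of a $\hat{T}$-invariant measure, so ``contradicting Atkinson on any $\hat{T}$-invariant measure projecting to $\mu$'' does not directly engage. The cleaner route is the paper's: apply the subadditive Theorem~\ref{thm2.7} on $X$ itself to $f_n(x)=\log\pmb{\|}\bA(n,x)|W(x)\pmb{\|}$, observe $\limsup f_n=-\infty$ on $\{W\neq\{0\}\}$ while $\lim n^{-1}f_n=0$ there, a contradiction, so $\{W\neq\{0\}\}$ is $\mu$-null; no projective bundle is needed for this half. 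Second, the measurability of the stable fibre $W(x)=V^s(x)$ (on which this whole argument rests, including the well-definedness of your quotient cocycle) is not automatic and in fact requires a genuine argument: $V^s$ may strictly contain the Oselede\v{c} subspace $E^s$ (as Example~\ref{exa2.5} shows), so its measurability is not a corollary of Oselede\v{c}'s theorem. The paper devotes Theorem~\ref{thm2.1}, via the Lusin--Choquet--Meyer projection and section theorems and passage to the $\mu$-completion, to establish exactly this. You should incorporate that step explicitly rather than asserting it. Once these three items are repaired, the remainder of your plan---quotient cocycles, norm-preserving lifting from the quotient, and the natural extension / Polish hypothesis for the selection in (d)(iii)---is sound and essentially parallel to the paper.
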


This result improves the classical Oselede\v{c} multiplicative ergodic theorem and its recent improved version of Froyland, LLoyd and Quas~\cite[Theorem~4.1]{FLQ} by the items (d)-(ii), (iii) and (f)-(ii). If $\mu$ is ergodic, then $s(x)$ and $\lambda_i(x)$, for $1\le i\le s(x)$, all are constants mod 0 under the sense of the probability measure $\mu$.

\begin{rem}\label{rem1}%%%
Although for each $v\in V^{(i)}(x)\setminus V^{(i-1)}(x)$ for $2\le i\le s(x)$, $e^{-\lambda_{i}(x)n}\bA(n,x)v$ has the Lyapunov exponent zero, yet $e^{-\lambda_{i}(x)n}\bA(n,x)v$ might converge asymptotically to $0$ for some base points $x\in B$. Even if we additionally impose the condition that $\bA$ is \textit{uniformly product-bounded},\footnote{It should be noted that since here $\bigsqcup_{n\ge0}\{\bA(n,x)\,|\,x\in X\}$ does not need to be a semigroup under the matrix multiplication, we cannot choose a ``good'' norm $\|\cdot\|_*$ on $\mathbb{R}^d$ so that $\pmb{\|}\bA(1,x)\pmb{\|}_*\le1$ for all $x\in X$ under this uniform product-boundedness condition.} i.e.,
$$
\exists\,\beta\ge1\textrm{ such that }\pmb{\|}\bA(n,x)\pmb{\|}\le\beta\quad\forall n\in\mathbb{Z}_+\textrm{ and }x\in X,
$$
this phenomenon still possibly happens (cf.~Example~\ref{exa2.5} in Section~\ref{sec2.1}) for $i=s(x)$. Thus the properties (d)-(ii) and (iii) of Theorem~\ref{thm1.1} are the main nontrivial new points here; and
our theorem is more finer than Oselede\v{c}'s theorem.

On the other hand, without the uniform product-boundedness condition our Theorem~\ref{thm1.1}
is also beyond the situation of Morris~\cite[Theorems~2.1 and 2.2]{Mor-AM}.
\end{rem}

\begin{rem}\label{rem2}%%%
For any $x\in B$ and $1\le i\le s(x)$, we write
$$
\widehat{V}^{(i)}(x)=V^{(i)}(x)\cap V^{(i-1)}(x)^\perp.
$$
Now for any $v\in\widehat{V}^{(i)}(x)$ and $n\ge1$, let $\widehat{\bA}^{(i)}(n,x)v$ be the projection of $\bA(n,x)v$ onto $\widehat{V}^{(i)}(T^n(x))$. It is easy to see that
$\widehat{\bA}^{(i)}(m+n,x)v=\widehat{\bA}^{(i)}(n,T^m(x))\widehat{\bA}^{(i)}(m,x)v$. Thus we can define the linear transformations
$$
\widehat{\bA}^{(i)}(n,x)\colon\widehat{V}^{(i)}(x)\rightarrow\widehat{V}^{(i)}(T^n(x)).
$$
Based on the property (f) of Theorem~\ref{thm1.1} we have
$$\mathbb{R}^d=\widehat{V}^{(1)}(x)\oplus\dotsm\oplus\widehat{V}^{(s(x))}(x).$$
Moreover from the property (f) of Theorem~\ref{thm1.1},
\begin{equation*}
\widehat{\bA}(n,x)\colon\widehat{V}^{(1)}(x)\oplus\dotsm\oplus\widehat{V}^{(s(x))}(x)\rightarrow\widehat{V}^{(1)}(T^n(x))\oplus\dotsm\oplus\widehat{V}^{(s(x))}(T^n(x)),
\end{equation*}
defined by
$$v=v_1+\dotsm+v_{s(x)}\mapsto\widehat{\bA}(n,x)v=\widehat{\bA}^{(1)}(n,x)v_1+\dotsm+\widehat{\bA}^{(s(x))}(n,x)v_{s(x)},$$
induces a natural cocycle $\widehat{\bA}\colon\mathbb{Z}_+\times X\rightarrow\mathbb{R}^{d\times d}$, for $\mu$-a.e. $x\in X$ which has the same Lyapunov exponents $-\infty=\lambda_1(x)<\lambda_2(x)<\dotsm<\lambda_{s(x)}(x)<\infty$ associated to the $\widehat{\bA}$-invariant direct decomposition $\widehat{V}^{(1)}(x)\oplus\dotsm\oplus\widehat{V}^{(s(x))}(x)$ and the same stability as $\bA$ according to Liao's version of the multiplicative ergodic theorem (cf.~\cite{Liao} and also see \cite{Dai09}).

Since our driving system $T$ is not necessarily to be invertible, there is no such an $\bA$-invariant direct decomposition of $\mathbb{R}^d$ corresponding to the Lyapunov exponents, the induced cocycle $\widehat{\bA}$ should be useful.
\end{rem}

\begin{rem}\label{rem3}%%%
For the filtration given by Theorem~\ref{thm1.1}, it is possible that
\begin{equation*}
\liminf_{n\to\infty}e^{-\lambda_i(x)n}\pmb{\|}\bA(n,x)|\widehat{V}^{(i)}(x)\pmb{\|}_{\min}=0,\quad 2\le i\le s(x),
\end{equation*}
for $\mu$-a.e. $x\in X$; for example, $\bA(n,x)\equiv\left[\begin{smallmatrix}1&n\\0&1\end{smallmatrix}\right]$ for all $x\in X$ and $n\ge1$. For which $\lambda_2(x)\equiv0$ is a Lyapunov exponent of multiplicity 2 and $V^{(1)}(x)\equiv\{0\}, \widehat{V}^{(2)}(x)\equiv \mathbb{R}^2$.
\end{rem}

%%%%%%%%%%%%%%%%%%%%%%%%%%%%%%%%%%%%%%%%%%%%%%%%
\subsection{Outline}%%%
The remains of this paper will be simply organized as follows. We shall prove our main result Theorem~\ref{thm1.1} in Section~\ref{sec2} based on Oselede\v{c}'s theorem and its improved version of Froyland, LLoyd and Quas~\cite[Theorem~4.1]{FLQ}, and a series of lemmas. We will present an application to asymptotical stability of linear random processes (Theorem~\ref{thm3.5}) in Sections~\ref{sec3}.

%%%%%%%%%%%%%%%%%%%%%%%%%%%%%%%%%%%%%%%%%%%%%%%%%%%%
%%%%%%%%%%%%%%%%%%%%%%%%%%%%%%%%%%%%%%%%%%%%%%%%%%%%
\section{Finer filtration of a linear cocycle}\label{sec2}%%%
This section will be devoted to proving our main results Theorem~\ref{thm1.1} stated in Section~\ref{sec1}. Besides Oselede\v{c}'s multiplicative ergodic theorem stated in Section~\ref{sec1}, we will need a series of independently interesting lemmas here including the measurability of stable manifolds and the non-oscillatory behavior of subadditive random process.

\subsection{The measurability of stable manifolds}\label{sec2.1}%%%
Let $T\colon X\rightarrow X$ be a measurable transformation on a measurable space $(X,\mathscr{F})$, where $X$ does not need to be a topological space, and we simply write $T^t(x)=t\bdot x$ for all $t\in\mathbb{Z}_+$. From now on, let $\bA\colon\mathbb{Z}_+\times X\rightarrow\mathbb{R}^{d\times d}$ be a measurable matrix-valued cocycle driven by $T$, which is not necessarily to satisfy the integrability condition.

For an arbitrary integer $p$ with $0\le p\le d$, by $\mathscr{G}(p,\mathbb{R}^d)$ we denote the set of all $p$-dimensional subspaces of $\mathbb{R}^d$ and
set
\begin{equation*}
\mathscr{G}(\mathbb{R}^d)=\bigsqcup_{p=0}^d\mathscr{G}(p,\mathbb{R}^d),
\end{equation*}
where $\bigsqcup$ means the disjoint union. We equip $\mathscr{G}(\mathbb{R}^d)$ with the standard compact topology induced by the Hausdorff metric $\mathrm{D_H}(\cdot,\cdot)$, i.e., for any $V,W\in\mathscr{G}(\mathbb{R}^d)$,
\begin{equation*}
\mathrm{D_H}(V,W)=\max\left\{\sup_{v\in V^\sharp}\min_{w\in W^\sharp}\|v-w\|,\sup_{w\in W^\sharp}\min_{v\in V^\sharp}\|w-v\|\right\}
\end{equation*}
where $V^\sharp=\{v\in V\colon\|v\|=1\}$ if $\dim V>0$ and $V^\sharp=\{0\}$ if $\dim V=0$.
Here $\|\cdot\|$ is the given vector norm on $\mathbb{R}^d$ as before.

For any $x\in X$, we set
\begin{equation}\label{eq2.1}%%%
V^s(x)=\left\{u\in\mathbb{R}^d\colon\|\bA(t,x)u\|\to0\textrm{ as }t\to\infty\right\}.
\end{equation}
It is a linear subspace of $\mathbb{R}^d$ with the invariance
\begin{equation}
\bA(t,x)V^s(x)\subseteq V^s(t\bdot x)\quad \forall t\in\mathbb{Z}_+
\end{equation}
and we call it the ``stable manifold/direction" of $\bA$ over the driving base point $x$.

As to be shown by Example~\ref{exa2.5} below, $V^s(x)$ is not necessarily equal to the exponentially stable space
\begin{equation}
E^s(x)=\left\{u\in\mathbb{R}^d\colon\lim_{n\to\infty}\frac{1}{n}\log\|\bA(n,x)u\|<0\right\}.
\end{equation}
Hence the measurability of $V^s(x)$ with respect to the base point $x$ is not an obvious consequence of the classical multiplicative ergodic theorem.

To prove Theorem~\ref{thm1.1} stated in Section~\ref{sec1}, we need to use the following preliminary result on the regularity of $V^s(x)$ respecting $x$ in the natural measure-theoretic sense.

\begin{thm}\label{thm2.1}
Under the situation above and given any probability measure $\mu$ on $(X,\mathscr{F})$, there exists an $\mathscr{F}$-measurable function $\eta\colon X\rightarrow\mathscr{G}(\mathbb{R}^d)$ such that
\begin{equation*}
\eta(x)=V^s(x)\quad\textrm{and}\quad\dim V^s(x)\ge\dim V^s(n\bdot x)\ \forall n\ge1
\end{equation*}
for $\mu$-a.e. $x\in X$.
\end{thm}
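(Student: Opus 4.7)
The plan is to split the proof into two parts: (I) the measurability of $x \mapsto V^s(x)$ as a map into $\mathscr{G}(\mathbb{R}^d)$, which carries the real content; and (II) the pointwise dimension inequality, which reduces to an elementary linear-algebra computation.

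For (I), I would first introduce the generalized seminorm
$$\Phi(x, v) := \limsup_{t \to \infty} \|\bA(t, x) v\| \in [0, +\infty],$$
and note that it is jointly $\mathscr{F} \otimes \mathscr{B}(\mathbb{R}^d)$-measurable, since each $(x, v) \mapsto \|\bA(t, x) v\|$ is jointly measurable and $\limsup$ over the countable index $t$ preserves measurability. For fixed $x$, $\Phi(x, \cdot)$ is positively homogeneous and sub-additive by the standard $\limsup$ inequalities, so $V^s(x) = \{v : \Phi(x, v) = 0\}$ is the null space of a generalized seminorm, hence a linear subspace of $\mathbb{R}^d$. For each integer $0 \le p \le d$, the set $E_p := \{(x, v_1, \ldots, v_p) : \Phi(x, v_i) = 0 \text{ for all } i, \text{ and } v_1, \ldots, v_p \text{ are linearly independent}\}$ lies in $\mathscr{F} \otimes \mathscr{B}((\mathbb{R}^d)^p)$, so its projection $X_{\ge p} := \{x : \dim V^s(x) \ge p\}$ is analytic and thus $\mu$-measurable after completing $\mathscr{F}$. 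Stratifying $X$ by $X_p := X_{\ge p} \setminus X_{\ge p+1}$ and applying a measurable selection theorem (e.g., Kuratowski--Ryll-Nardzewski or iterated von Neumann selection) to the closed-valued multifunctions $x \mapsto V^s(x) \cap S^{d-1} \cap e_1(x)^\perp \cap \cdots$ furnishes a measurable orthonormal frame $\{e_1(x), \ldots, e_p(x)\}$ of $V^s(x)$ on each $X_p$. Then $P(x) := \sum_i e_i(x) e_i(x)^*$ is a measurable orthogonal projector and $\eta(x) := \mathrm{range}\, P(x)$ is the desired Grassmannian-valued measurable map, after a $\mu$-null correction to return to genuine $\mathscr{F}$-measurability.

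For (II), the cocycle relation gives $\bA(t+n, x) v = \bA(t, n \bdot x) \bA(n, x) v$, so $v \in V^s(x) \iff \bA(n, x) v \in V^s(n \bdot x)$; equivalently $V^s(x) = \bA(n, x)^{-1}(V^s(n \bdot x))$. Applying the rank-nullity identity $\dim L^{-1}(S) = \dim \ker L + \dim (S \cap \mathrm{im}\, L)$ with $L = \bA(n, x)$ and $S = V^s(n \bdot x)$, and combining with the general bound $\dim S - \dim(S \cap \mathrm{im}\, L) \le \mathrm{codim}\, \mathrm{im}\, L = \dim \ker L$, one obtains $\dim V^s(x) \ge \dim V^s(n \bdot x)$ pointwise for every $x$, which is stronger than the a.e. assertion in the statement.

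The main obstacle is the measurability step. Because the defining property $\Phi(x, v) = 0$ is not stable under arbitrarily small perturbations of $v$, a countable dense subset of $\mathbb{R}^d$ need not intersect $V^s(x)$ densely, so $V^s$ cannot be reconstructed by a countable operation on the scalar evaluations $\Phi(x, u_k)$ at fixed dense points $\{u_k\}$. One is thus forced through the projection/analytic-set argument and measurable selection, paying for it with a $\mu$-completion step that is rectified on a $\mu$-null set to recover $\mathscr{F}$-measurability of $\eta$.
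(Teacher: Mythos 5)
Your proposal is correct, and at the level of strategy it is the same as the paper's: both hinge on joint measurability of a decay functional, the Lusin--Choquet--Meyer projection theorem (Lemma~2.3) to handle a stability condition that is neither open nor closed in the fibre variable, the Jankov--von Neumann measurable-selection theorem (Lemma~2.4), and the completion Lemma~2.2 to trade $\mathscr{F}_\mu$-measurability for $\mathscr{F}$-measurability modulo a null set. The implementations differ: the paper works directly on $X\times\mathscr{G}(\mathbb{R}^d)$ with $\bar f(x,L)=\limsup_n\pmb{\|}\bA(n,x)|L\pmb{\|}$, observing that $V^s(x)$ is the unique element of $\{L:\bar f(x,L)=0\}$ of maximal dimension and then applying projection and selection on the Grassmannian, whereas you work in $(\mathbb{R}^d)^p$ with the pointwise $\Phi(x,v)$, stratify by $\dim V^s(x)$, and iteratively select an orthonormal frame before descending to $\mathscr{G}(\mathbb{R}^d)$ via the induced projector. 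These routes are interchangeable, and yours is somewhat more explicit about the iteration that the paper compresses into ``repeatedly using item~(i) of Lemma~2.3''. One small caution: of your two cited selection theorems, the Jankov--von Neumann one (the paper's Lemma~2.4(ii)) applies immediately because the graphs you form lie in $\mathscr{F}_\mu\otimes\mathscr{B}$, whereas invoking Kuratowski--Ryll-Nardzewski would first require checking measurability of the multifunction in the weak sense, which is again a projection argument --- so it is cleaner to stick with the Jankov--von Neumann version, as the paper does. Your diagnosis that $\Phi(x,\cdot)=0$ is not stable under perturbation of $v$, so countable dense testing fails and the analytic-set machinery is forced, is exactly the right explanation of why this step is nontrivial. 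For part~(II), your rank--nullity computation from $V^s(x)=\bA(n,x)^{-1}\bigl(V^s(n\bdot x)\bigr)$ is correct and gives the inequality pointwise for every $x$; the paper only asserts the equivalent annihilator bound $\dim(V^s(x))^\perp\le\dim(V^s(n\bdot x))^\perp$ without computation, so your version is in fact a more complete account of that step. No gaps.
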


To prove this theorem, we first introduce some classical results in measure theory. For a probability measure $\mu$ on the measurable space $(X,\mathscr{F})$, let $\mathscr{F}_\mu$ be the ``completion" of $\mathscr{F}$ respecting $\mu$, as the $\sigma$-field
$\mathscr{F}_\mu=\sigma(\mathscr{F}\cup\mathscr{N}_\mu)$, where $\mathscr{N}_\mu$ denotes the class of all subsets of arbitrary $\mu$-null sets in $\mathscr{F}$. Then, the ``universal completion'' of $\mathscr{F}$ is defined as the $\sigma$-field $\mathscr{F}^*=\bigcap_\mu\mathscr{F}_\mu$, where the intersection extends over all probability measures $\mu$ on $\mathscr{F}$.

Then not only $\mu$ has a unique extension from $\mathscr{F}$ to the $\sigma$-field $\mathscr{F}_\mu$ but also $T$ is a measure-preserving transformation of $(X,\mathscr{F}_\mu,\mu)$. This completion enables us using some classical results in measure theory.

\begin{lem}[Completion]\label{lem2.2}%%%%%
Let $(Y,\mathscr{B}_Y)$ be a Borel measurable space and $\mu$ a probability measure on $(X,\mathscr{F})$. Then, a function $f\colon X\rightarrow Y$
is $\mathscr{F}_\mu/\mathscr{B}_Y$-measurable if and only if there exists some $\mathscr{F}/\mathscr{B}_Y$-measurable function $g\colon X\rightarrow Y$ such that $f(x)=g(x)$ for $\mu$-a.e. $x\in X$.
\end{lem}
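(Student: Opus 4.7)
The backward direction is immediate: if $g$ is $\mathscr{F}/\mathscr{B}_Y$-measurable and $f=g$ on $X\setminus N$ for some $\mu$-null $N\in\mathscr{F}$, then for every $B\in\mathscr{B}_Y$, $f^{-1}(B)=(g^{-1}(B)\setminus N)\cup(f^{-1}(B)\cap N)$; the first piece lies in $\mathscr{F}\subseteq\mathscr{F}_\mu$ and the second lies in $\mathscr{N}_\mu\subseteq\mathscr{F}_\mu$. The substantive content is the forward direction, and my plan is to reduce it to the real-valued case and then use a simple-function approximation.

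For the reduction, the hypothesis that $(Y,\mathscr{B}_Y)$ is a Borel measurable space lets me fix a bi-measurable injection $\varphi\colon Y\hookrightarrow[0,1]$ with $\varphi(Y)\in\mathscr{B}_{[0,1]}$ (Kuratowski's theorem on standard Borel spaces). If I can produce an $\mathscr{F}/\mathscr{B}_{[0,1]}$-measurable $h\colon X\to[0,1]$ with $h=\varphi\circ f$ $\mu$-a.e., then after adjusting $h$ on a $\mu$-null set so that it takes values in $\varphi(Y)$ (replacing it by a fixed point of $\varphi(Y)$ off the full-measure set where $h\in\varphi(Y)$), the map $g=\varphi^{-1}\circ h$ will be $\mathscr{F}/\mathscr{B}_Y$-measurable and equal to $f$ $\mu$-a.e.

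It therefore suffices to handle the case $Y=\mathbb{R}$. Here the strategy is classical. First, use the definition $\mathscr{F}_\mu=\sigma(\mathscr{F}\cup\mathscr{N}_\mu)$ to show that every $A\in\mathscr{F}_\mu$ can be written as $A=A'\triangle N$ with $A'\in\mathscr{F}$ and $N$ contained in some $M\in\mathscr{F}$ of $\mu$-measure zero (the class of sets admitting such a representation is a $\sigma$-field containing $\mathscr{F}\cup\mathscr{N}_\mu$). Next, approximate the $\mathscr{F}_\mu$-measurable $f$ pointwise by $\mathscr{F}_\mu$-simple functions $s_n=\sum_{k}c_{n,k}\mathbf{1}_{A_{n,k}}$, replace each $A_{n,k}$ by its $\mathscr{F}$-representative $A'_{n,k}$ to obtain $\mathscr{F}$-simple functions $t_n$, and let $M=\bigcup_{n,k}M_{n,k}\in\mathscr{F}$, which is $\mu$-null. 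On $X\setminus M$ we have $t_n=s_n$ for every $n$, hence $t_n(x)\to f(x)$ there. Finally, set $g(x)=\lim_{n\to\infty}t_n(x)$ on the $\mathscr{F}$-measurable set where the limit exists in $\mathbb{R}$, and $g(x)=0$ elsewhere; then $g$ is $\mathscr{F}/\mathscr{B}_\mathbb{R}$-measurable and agrees with $f$ off $M$.

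The only delicate step is the reduction via a Borel isomorphism, which rests on the hypothesis that $(Y,\mathscr{B}_Y)$ is a Borel measurable space; without it, the conclusion can fail. The rest is routine once one has the representation $A=A'\triangle N$ for elements of the completion, so I expect no real obstacles beyond invoking standard Borel space theory at the reduction step.
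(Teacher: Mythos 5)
Your proof is correct, and it is essentially the argument behind the reference the paper cites (Kallenberg, \emph{Foundations of Modern Probability}, 2nd ed., Lemma~1.25): the paper itself offers no proof, simply pointing to that textbook lemma. Your reduction to a Borel subset of $[0,1]$ via a Borel isomorphism, followed by the $A=A'\vartriangle N$ decomposition for sets in $\mathscr{F}_\mu$ and simple-function approximation, reconstructs the standard textbook argument.
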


This result is well known and can be found in many textbooks on real analysis and probability, for example, in \cite[Lemma~1.25]{Kal}.
The following is another classical result needed.

\begin{lem}[{Projection and Sections; Lusin, Choquet, Meyer; see~\cite[Theorem~A1.4]{Kal}}]\label{lem2.3}%%%%%%
Let $(Y,\mathscr{B}_Y)$ be an arbitrary Borel measurable space and $\pi\colon X\times Y\rightarrow X$ the canonical projection defined by $(x,y)\mapsto x$. Then for any $B\in\mathscr{F}\otimes\mathscr{B}_Y$,
\begin{enumerate}
\item[$\mathrm{(i)}$] $\pi(B)$ belongs to $\mathscr{F}^*$;

\item[$\mathrm{(ii)}$] for any probability measure $\mu$ on $\mathscr{F}$, there exists a $\mathscr{F}_\mu$-measurable element $\eta\colon X\rightarrow Y$, which is such that $(x,\eta(x))\in B$ for $\mu$-a.e. $x\in\pi(B)$.
\end{enumerate}
\end{lem}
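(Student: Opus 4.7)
The plan is to derive both parts from the classical theory of Souslin (analytic) sets over a paving. Given a paving $\mathcal{P}$ on a set $Z$, a Souslin scheme is a family $\{R_s\}_{s\in\mathbb{N}^{<\mathbb{N}}}\subset\mathcal{P}$ whose nucleus is
\[
N=\bigcup_{\sigma\in\mathbb{N}^{\mathbb{N}}}\bigcap_{n\ge1}R_{\sigma|n};
\]
the collection of such nuclei is the Souslin closure $\mathcal{S}(\mathcal{P})$. Three ingredients drive the argument: (a) the product $\sigma$-algebra $\mathscr{F}\otimes\mathscr{B}_Y$ lies in $\mathcal{S}(\mathcal{R})$, where $\mathcal{R}=\{A\times C:A\in\mathscr{F},\,C\in\mathscr{B}_Y\}$ is the rectangle paving; (b) the direct image $\pi$ commutes with unions and with nested intersections along a branch of a scheme, so $\pi(\mathcal{S}(\mathcal{R}))\subset\mathcal{S}(\pi(\mathcal{R}))=\mathcal{S}(\mathscr{F})$; (c) every set in $\mathcal{S}(\mathscr{F})$ is universally measurable, i.e., lies in $\mathscr{F}^*$.

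First I would reduce $(Y,\mathscr{B}_Y)$ to a standard Borel space; ``Borel measurable space'' in the Kallenberg sense means a space measurably isomorphic to a Borel subset of $[0,1]$ (equivalently of $\mathbb{N}^{\mathbb{N}}$), which legitimises Souslin schemes indexed by $\mathbb{N}^{<\mathbb{N}}$. For (a), after replacing each scheme $\{R_s\}$ by its nested version $R'_s=R_{s|1}\cap\dotsb\cap R_{s|n}$, the Souslin operation dominates both countable unions and countable intersections, so $\mathcal{S}(\mathcal{R})$ is a $\sigma$-algebra containing $\mathcal{R}$, hence contains $\mathscr{F}\otimes\mathscr{B}_Y$. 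For (b), once nestedness is arranged,
\[
\pi(B)=\bigcup_{\sigma}\bigcap_{n}\pi(R'_{\sigma|n})\quad\text{with each }\pi(R'_{\sigma|n})\in\mathscr{F},
\]
so $\pi(B)\in\mathcal{S}(\mathscr{F})$.

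Ingredient (c) is the capacitability step, which I would establish by showing that for any probability $\mu$ on $\mathscr{F}$ the outer measure $\mu^*$ extends to a Choquet capacity on $2^X$, and that every member of $\mathcal{S}(\mathscr{F})$ is $\mu^*$-capacitable; capacitability yields inner approximation by a countable union of scheme intersections of $\mathscr{F}$-sets and forces $\pi(B)\in\mathscr{F}_\mu$. Since $\mu$ was arbitrary, $\pi(B)\in\mathscr{F}^*$, proving (i). For (ii) I would invoke the Jankov--von Neumann uniformisation: using the scheme of (a) together with the lexicographic order on $\mathbb{N}^{\mathbb{N}}$, for each $x\in\pi(B)$ select the lex-least $\sigma$ with $x\in\bigcap_n\pi(R'_{\sigma|n})$ and then a point $\eta(x)\in Y$ on the corresponding section. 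Measurability of $\eta$ reduces to the measurability of each $\pi(R'_{\sigma|n})$ obtained in (i), together with the standard fact that ``first coordinate of the lex-least branch'' is a measurable operation on trees; the resulting selector is $\mathscr{F}^*$-measurable and hence $\mathscr{F}_\mu$-measurable, and may be extended arbitrarily off $\pi(B)$.

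The principal obstacle is ingredient (c): verifying that $\mu^*$ is a Choquet capacity (continuous along increasing sequences and along decreasing sequences drawn from a suitable compact paving) and pushing capacitability through the Souslin operation is the genuine technical core and not a short invocation. By contrast, the monotone-class verification in (a) and the set identity in (b) are essentially bookkeeping, and the measurable selection in (ii) is a small refinement of the capacitability construction obtained by tracking which branch of the scheme the selector walks along.
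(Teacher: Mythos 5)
The paper offers no proof of this lemma: it is quoted as a classical result of Lusin--Choquet--Meyer with a pointer to Kallenberg, Theorem~A1.4, so the only benchmark is the textbook argument, and your Souslin-scheme/capacitability architecture is indeed that argument. However, two of the steps you classify as ``essentially bookkeeping'' are precisely the places where the proof can break, and as written they do break.

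First, $\mathcal{S}(\mathcal{R})$ is \emph{not} a $\sigma$-algebra: the Souslin operation is closed under countable unions and countable intersections but notoriously not under complementation (otherwise analytic would coincide with Borel). The containment $\mathscr{F}\otimes\mathscr{B}_Y\subseteq\mathcal{S}(\mathcal{R})$ is still true, but it must be obtained by noting that $\{B: B\in\mathcal{S}(\mathcal{R})\ \text{and}\ B^c\in\mathcal{S}(\mathcal{R})\}$ is a $\sigma$-algebra containing the rectangles (a rectangle's complement being a finite union of rectangles). Second, and more seriously, the identity $\pi\bigl(\bigcap_n R'_{\sigma|n}\bigr)=\bigcap_n\pi\bigl(R'_{\sigma|n}\bigr)$ is false for a general nested sequence: take $R'_n=X\times[n,\infty)$ in $X\times\mathbb{R}$; each $\pi(R'_n)=X$, yet the intersection projects to $\varnothing$. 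This is exactly where the hypothesis that $Y$ is a Borel space must be used for more than indexing the scheme: one embeds $Y$ in a compact metric space and refines the scheme so that along each branch the $Y$-sections of the $R'_{\sigma|n}$ over a fixed $x$ form a decreasing sequence of nonempty compact sets; the finite-intersection property then produces the point witnessing $x\in\pi\bigl(\bigcap_n R'_{\sigma|n}\bigr)$. The same semicompactness, together with arranging that the section diameters shrink to zero, is what makes the step ``then pick a point $\eta(x)$ on the corresponding section'' in (ii) both well defined and measurable; without it the lex-least branch need not carry any point of $B$ over $x$, and even when it does you have not specified a measurable choice of $\eta(x)$. So the capacitability of $\mu^*$ is not the only genuine technical core of the proof; the compact refinement of the scheme on the $Y$-side is the other, and it cannot be waved through.
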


As a result, we can obtain the following from Lemma~\ref{lem2.3}.

\begin{lem}\label{lem2.4}%%%
Let $(Y,\mathscr{B}_Y)$ be an arbitrary Borel measurable space, $\mu$ a probability measure on $(X,\mathscr{F})$, and $\pi\colon X\times Y\rightarrow X$ the canonical projection. Then for any $B\in\mathscr{F}_\mu\otimes\mathscr{B}_Y$,
\begin{enumerate}
\item[$\mathrm{(i)}$] $\pi(B)$ belongs to $\mathscr{F}_\mu$;

\item[$\mathrm{(ii)}$] there exists a $\mathscr{F}_\mu$-measurable element $\eta\colon X\rightarrow Y$ such that $(x,\eta(x))\in B$ for $\mu$-a.e. $x\in\pi(B)$.
\end{enumerate}
\end{lem}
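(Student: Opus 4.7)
The approach is to reduce the statement from $\mathscr{F}_\mu\otimes\mathscr{B}_Y$ to $\mathscr{F}\otimes\mathscr{B}_Y$ so that Lemma~\ref{lem2.3} applies directly. The reduction rests on a standard ``cylinder completion'' observation: every $B\in\mathscr{F}_\mu\otimes\mathscr{B}_Y$ differs from some $B_0\in\mathscr{F}\otimes\mathscr{B}_Y$ only inside a cylinder $N\times Y$, where $N\in\mathscr{F}$ is $\mu$-null. To prove this I would introduce
\[
\mathscr{D}=\{E\in\mathscr{F}_\mu\otimes\mathscr{B}_Y:\exists\,E_0\in\mathscr{F}\otimes\mathscr{B}_Y\text{ and a $\mu$-null }N\in\mathscr{F}\text{ with }E\triangle E_0\subseteq N\times Y\}
\]
and check that $\mathscr{D}$ is a $\sigma$-algebra: complementation is immediate because symmetric differences are complement-invariant, and closure under countable unions follows from $\bigl(\bigcup_k E_k\bigr)\triangle\bigl(\bigcup_k E_{0,k}\bigr)\subseteq\bigcup_k(E_k\triangle E_{0,k})\subseteq\bigl(\bigcup_k N_k\bigr)\times Y$. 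Next, $\mathscr{D}$ contains every rectangle $A\times C$ with $A\in\mathscr{F}_\mu$ and $C\in\mathscr{B}_Y$: choose $A_0\in\mathscr{F}$ and a $\mu$-null $N\in\mathscr{F}$ with $A\triangle A_0\subseteq N$ (a defining property of $\mathscr{F}_\mu$), so that $(A\times C)\triangle(A_0\times C)\subseteq N\times Y$. Hence $\mathscr{D}=\mathscr{F}_\mu\otimes\mathscr{B}_Y$, and for the given $B$ we obtain the desired approximants $B_0$ and $N$.

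For (i), a one-line set-theoretic argument gives $\pi(B)\triangle\pi(B_0)\subseteq N$: any $x\in\pi(B)\setminus\pi(B_0)$ has a witness $(x,y)\in B\setminus B_0\subseteq N\times Y$, so $x\in N$, and symmetrically in the other direction. Lemma~\ref{lem2.3}(i) places $\pi(B_0)\in\mathscr{F}^*\subseteq\mathscr{F}_\mu$, and since subsets of the $\mu$-null $N$ lie in $\mathscr{F}_\mu$, we conclude $\pi(B)\in\mathscr{F}_\mu$. For (ii), Lemma~\ref{lem2.3}(ii) supplies an $\mathscr{F}_\mu$-measurable $\eta_0\colon X\rightarrow Y$ satisfying $(x,\eta_0(x))\in B_0$ for $\mu$-a.e. $x\in\pi(B_0)$. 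I would set $\eta=\eta_0$ and observe that for every $x\in\pi(B)\setminus N$ one has $x\in\pi(B_0)$, and once $(x,\eta_0(x))\in B_0$ it must also lie in $B$—otherwise $(x,\eta_0(x))\in B_0\setminus B\subseteq N\times Y$ would force $x\in N$. Since $N$ is $\mu$-null, this yields $(x,\eta(x))\in B$ for $\mu$-a.e. $x\in\pi(B)$.

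The only step carrying real content is the approximation identity $\mathscr{D}=\mathscr{F}_\mu\otimes\mathscr{B}_Y$; once that is established, everything reduces to Lemma~\ref{lem2.3} via elementary set manipulations. I expect no genuine obstacle beyond carefully bookkeeping which sets are in $\mathscr{F}$ versus $\mathscr{F}_\mu$ so as to respect the statement of Lemma~\ref{lem2.3}, which is crucially formulated for $\mathscr{F}\otimes\mathscr{B}_Y$ rather than its completion.
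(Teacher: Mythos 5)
Your proof is correct, but it takes a genuinely different route from the paper. The paper's proof is a one-liner: apply Lemma~\ref{lem2.3} verbatim with $\mathscr{F}_\mu$ substituted for $\mathscr{F}$, then invoke the idempotence of completion, $(\mathscr{F}_\mu)_\mu=\mathscr{F}_\mu$, to convert the conclusions $\pi(B)\in(\mathscr{F}_\mu)^{*}\subseteq(\mathscr{F}_\mu)_\mu$ and ``$(\mathscr{F}_\mu)_\mu$-measurable selection'' into statements about $\mathscr{F}_\mu$ itself. You instead keep Lemma~\ref{lem2.3} anchored to the original $\mathscr{F}$ and prove an explicit approximation lemma: every $B\in\mathscr{F}_\mu\otimes\mathscr{B}_Y$ differs from some $B_0\in\mathscr{F}\otimes\mathscr{B}_Y$ only inside a null cylinder $N\times Y$, via the usual good-sets argument for the class $\mathscr{D}$. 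That reduction is correct (the $\sigma$-algebra check, the rectangle case using the characterization $A\triangle A_0\subseteq N$ of $\mathscr{F}_\mu$, and the two set-theoretic deductions $\pi(B)\triangle\pi(B_0)\subseteq N$ and ``$(x,\eta_0(x))\in B_0$ forces $(x,\eta_0(x))\in B$ off $N$'' are all sound). What the paper's approach buys is brevity, at the cost of implicitly trusting that Lemma~\ref{lem2.3} holds for an arbitrary $\sigma$-algebra on $X$, not just the originally given one; your approach is longer but entirely hands-on and avoids re-examining the scope of Lemma~\ref{lem2.3}, instead isolating the single structural fact — the null-cylinder approximation of $\mathscr{F}_\mu\otimes\mathscr{B}_Y$ by $\mathscr{F}\otimes\mathscr{B}_Y$ — on which everything rests.
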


\begin{proof}
This result follows from Lemma~\ref{lem2.3} with $\mathscr{F}_\mu$ in place of $\mathscr{F}$ and the simple fact that $(\mathscr{F}_\mu)_\mu=\mathscr{F}_\mu$.
\end{proof}

We think of $\overline{\mathbb{R}}_+=[0,+\infty)\cup\{+\infty\}$ as the one-point compactification of $\mathbb{R}_+$; then $\mathbb{R}_+$ is an open subset of $\overline{\mathbb{R}}_+$. We are now ready to complete the proof of Theorem~\ref{thm2.1}.

\begin{proof}[Proof of Theorem~\ref{thm2.1}]
Based on the measurable cocycle $\bA$, let $f_n\colon X\times\mathscr{G}(\mathbb{R}^d)\rightarrow\overline{\mathbb{R}}_+$ be defined as $f_n(x,L)=\pmb{\|}\bA(n,x)|L\pmb{\|}$ for all $(x,L)\in X\times\mathscr{G}(\mathbb{R}^d)$, for $n=1,2,\dotsc$. Clearly, $f_n$ is naturally measurable. Set
\begin{equation*}
\bar{f}(x,L)=\limsup_{n\to\infty}f_n(x,L)\quad \forall (x,L)\in X\times\mathscr{G}(\mathbb{R}^d).
\end{equation*}
Then $\bar{f}\colon X\times\mathscr{G}(\mathbb{R}^d)\rightarrow\overline{\mathbb{R}}_+$ is also naturally measurable. Let
\begin{equation*}
{\ae}=\left\{\bar{f}=0\right\}={\bigsqcup}_{x\in X}{\ae}_x,\quad \textrm{where }\ae_x=\left\{L\colon\bar{f}(x,L)=0\right\}.
\end{equation*}
Then $\ae$ is a $\mathscr{F}\otimes\mathscr{B}_{\mathscr{G}(\mathbb{R}^d)}$-set and $\ae_x$ is a $\mathscr{B}_{\mathscr{G}(\mathbb{R}^d)}$-set.

We note that $V^s(x)\in\ae_x$, $L\subseteq V^s(x)$ for $L\in\ae_x$, and $\dim V^s(x)=\max\{\dim L\colon L\in\ae_x\}$ for each $x\in X$.
As the function $(x,L)\mapsto\dim L$ is measurable from $X\times\mathscr{G}(\mathbb{R}^d)$ into the discrete-topological space $\{0,1,\dotsc,d\}$,
from repeatedly using the item (i) of Lemma~\ref{lem2.3} we can easily obtain that $B:=\{(x,V^s(x))\,|\,x\in X\}$ is a $\mathscr{F}^*/\mathscr{B}_{\mathscr{G}(\mathbb{R}^d)}$-set.

Now from Lemma \ref{lem2.4} with $Y=\mathscr{G}(\mathbb{R}^d)$, it follows that there is an $\hat{\ae}\colon X\rightarrow\mathscr{G}(\mathbb{R}^d)$ that is $\mathscr{F}_\mu/\mathscr{B}_{\mathscr{G}(\mathbb{R}^d)}$-measurable such that $\hat{\ae}(x)=V^s(x)$ for $\mu$-a.e. $x\in X$. Then from Lemma~\ref{lem2.2}, one can find some $\mathscr{F}/\mathscr{B}_{\mathscr{G}(\mathbb{R}^d)}$-measurable $\eta\colon X\rightarrow\mathscr{G}(\mathbb{R}^d)$ satisfying the requirement of Theorem~\ref{thm2.1}.

Finally we note that $\dim (V^s(x))^\perp\le\dim (V^s(t\bdot x))^\perp$ and hence $\dim V^s(x)\ge\dim V^s(t\bdot x)$ for all $t\ge1$ and $x\in X$.

This completes the proof of Theorem~\ref{thm2.1}.
\end{proof}

If $V^s(x)=E^s(x)$ mod 0, then for Theorem~\ref{thm2.1} there is nothing needed to prove from the property (c) of Oselede\v{c}'s multiplicative ergodic theorem. The following example shows that the proof of Theorem~\ref{thm2.1} is nontrivial even though under an additional condition---the uniform product-boundedness.

Let $\varSigma_2^+=\{\sigma\colon\mathbb{Z}_+\rightarrow\{0,1\}\}$ be the one-sided symbolic space and $$T\colon\sigma=(\sigma(n))_{n\ge0}\mapsto\sigma(\bdot+1)=(\sigma(n+1))_{n\ge0}$$
the classical one-sided shift transformation on $\varSigma_2^+$.

\begin{example}\label{exa2.5}%%%
Driven by the shift transformation $T$, let $\bA\colon\mathbb{Z}_+\times\varSigma_2^+\rightarrow\mathbb{R}^{2\times 2}$ be naturally induced by $A=\{A_0,\, A_1\}$ with
 \begin{equation*}
   A_0=\left[\begin{matrix}1 & 0\\ 0&
   1\end{matrix}\right]\quad \textrm{and} \quad A_1=\left[\begin{matrix}\frac{1}{2} & 0\\ 0&
   1\end{matrix}\right],\quad \textrm{i.e.},\ \bA(n,\sigma)=A_{\sigma(n-1)}\dotsm A_{\sigma(0)}\ \forall n\ge1.
 \end{equation*}
Clearly $\bA$ is uniformly product-bounded. By induction we now construct a switching sequence $\bsigma$ over which $\bA(t,\bsigma)v$ converges to $0$
but not exponentially fast as $t\to\infty$ for some initial state $v\not=0$.

To this end, for a word $w=(i_0,\dotsc, i_{k-1})\in \{0,1\}^k$, let
$|w|=k$ denote the length of the word $w$ and $\mathrm{O}_k$ stand for the $k$-length
word consisting of $k$ numbers of $0$, i.e, $\mathrm{O}_k=(0,\dotsc,0)\in\{0,1\}^k$. For any pair of words $u=(u_1,\dotsc,u_k)$ and $w=(w_1,\dotsc,w_m)$, we set
\begin{equation*}
uw=(u_1,\dotsc,u_k,w_1,\dotsc,w_m)\in\{0,1\}^{k+m}
\end{equation*}
Let $\sigma_1=(1)$,
$\sigma_2=(\sigma_1\mathrm{O}_{|\sigma_1|^2}\sigma_1)$. Inductively, for $n\geq 2 $, let
\begin{equation*}
\sigma_{n}=(\sigma_{n-1}\mathrm{O}_{|\sigma_{n-1}|^2}\sigma_{n-1})\quad \textrm{and}\quad
\bsigma=\lim_{n\to\infty}\sigma_n.
\end{equation*}
A routine check shows that for $v=\left(\begin{matrix}1\\0\end{matrix}\right)\in\mathbb{R}^2$,
\begin{gather*}
\lim_{n\to\infty}\|A_{\bsigma(n-1)}\dotsm
A_{\bsigma(0)}v\|=0\\
\intertext{and}
\limsup_{n\to\infty}\frac{1}{n}\log \|A_{\bsigma(n-1)}\dotsm
A_{\bsigma(0)}v\|=\lim_{n\to\infty}\frac{n}{n^2}\log\frac{1}{2}=0.
\end{gather*}
So the convergence is not exponentially fast based on the switching sequence $\bsigma$.
\end{example}

This completes the construction of Example~\ref{exa2.5}.
%%%%%%%%%%%%%%%%%%%%%%%%%%%%%%%%%%%%%%%%%%%%%%%%%%%%%
%%%%%%%%%%%%%%%%%%%%%%%%%%%%%%%%%%%%%%%%%%%%%%%%%%%%%
\subsection{Non-oscillatory behavior of a subadditive random process}\label{sec2.2}%%%
To prove our Theorem~\ref{thm1.1}, we will need a result similar to Giles Atkinson's theorem on additive cocycles~\cite{Atk}. Atkinson's theorem (together with a result of K.~Schmidt) asserts the following.

\begin{lem}[Atkinson]\label{lem2.6}%%%
If $T\colon(X,\mathscr{F},\mu)\rightarrow(X,\mathscr{F},\mu)$ is an ergodic measure-preserving automorphism and $f\colon X\rightarrow\mathbb{R}$ is an integrable function with $\int_Xfd\mu=0$, then for $\mu$-a.e. $x\in X$ the sum $\sum_{k=0}^{n-1}f(T^kx)$ returns arbitrarily close to zero infinitely often.
\end{lem}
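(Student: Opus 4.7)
The plan is to lift the problem to the skew product $\tilde T\colon X\times\mathbb{R}\to X\times\mathbb{R}$, $\tilde T(x,t)=(Tx,t+f(x))$, which preserves the $\sigma$-finite infinite measure $\mu\times m$, where $m$ is Lebesgue measure on $\mathbb{R}$. Since $\tilde T^n(x,0)=(T^nx,S_n(x))$ for the Birkhoff sum $S_n(x):=\sum_{k=0}^{n-1}f(T^kx)$, the conclusion of the lemma is precisely the statement that the $\tilde T$-orbit of $(x,0)$ returns to the slab $W_\varepsilon:=X\times(-\varepsilon,\varepsilon)$ infinitely often, for every $\varepsilon>0$.

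Granting that $\tilde T$ is conservative, the deduction is standard. Poincar\'e recurrence applied to $W_\varepsilon$ (of finite $\mu\times m$-measure $2\varepsilon$) gives that for $\mu\times m$-a.e.\ $(x,t)\in W_\varepsilon$ one has $t+S_n(x)\in(-\varepsilon,\varepsilon)$ infinitely often, whence $|S_n(x)|<2\varepsilon$ infinitely often. By Fubini this holds for $\mu$-a.e.\ $x\in X$ and $m$-a.e.\ $t\in(-\varepsilon,\varepsilon)$, so in particular for $\mu$-a.e.\ $x$. Intersecting the resulting conull sets over $\varepsilon=1/k$, $k\in\mathbb{N}$, finishes the deduction.

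Everything therefore reduces to showing $\tilde T$ is conservative. I would argue by contradiction. If $\tilde T$ were dissipative, the Hopf decomposition supplies a wandering set of positive finite $\mu\times m$-measure, which, after a standard measure-theoretic refinement, may be taken of product form $W=A\times I$ with $\mu(A)>0$ and $I\subset\mathbb{R}$ a bounded interval. Unwrapping $\mu\times m(\tilde T^nW\cap W)=0$ fiberwise gives, for every $n\ge 1$ and $\mu$-a.e.\ $y\in A$ with $T^ny\in A$, that $I\cap(I+S_n(y))$ is $m$-null, so $|S_n(y)|\ge|I|$. Applying the same bound at $T^ny\in A$ yields $|S_m(y)-S_n(y)|=|S_{m-n}(T^ny)|\ge|I|$ for any two return times $n<m$ of $y$ to $A$. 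Hence, writing the consecutive return times of $y\in A$ as $n_1<n_2<\cdots$, the values $\{S_{n_j}(y)\}_{j\ge1}$ are pairwise $|I|$-separated, so
\begin{equation*}
\max_{1\le j\le k}|S_{n_j}(y)|\ge\tfrac{1}{2}(k-1)|I|.
\end{equation*}

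The main obstacle is closing the contradiction, which is precisely where the hypotheses $f\in L^1(\mu)$ and $\int_X f\,d\mu=0$ finally enter. Birkhoff's theorem applied to $\mathbf{1}_A$ yields $n_k/k\to 1/\mu(A)$, and Birkhoff applied to $f$ yields $S_n(y)/n\to 0$. A short truncation then gives $\max_{1\le j\le k}|S_{n_j}(y)|=o(n_k)=o(k)$ for $\mu$-a.e.\ $y$: for any $\varepsilon>0$ all but finitely many $n_j$ satisfy $|S_{n_j}|\le\varepsilon n_j\le\varepsilon n_k$, and the finitely many exceptions contribute only $O(1)$. This contradicts the linear lower bound $\tfrac{1}{2}(k-1)|I|$ derived above, so $\tilde T$ must in fact be conservative, completing the proof.
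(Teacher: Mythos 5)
The paper does not actually prove Lemma~\ref{lem2.6}: it is quoted from Atkinson~\cite{Atk} (with Schmidt's complement) and then used only as motivation for the subadditive variant, Theorem~\ref{thm2.7}, which the paper proves by a direct return-time argument with no skew product. Your skew-product route---lift to $\tilde T(x,t)=(Tx,t+f(x))$ on $(X\times\mathbb{R},\mu\times m)$, reduce the statement to conservativity of $\tilde T$, and get recurrence from Poincar\'e---is therefore a genuinely different (and standard, Schmidt/Aaronson-style) approach from the direct argument that the paper adapts in Theorem~\ref{thm2.7}; the first half of your proposal (conservativity $\Rightarrow$ conclusion via Poincar\'e plus Fubini over $t\in(-\eps,\eps)$) is correct as written.

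There is, however, a real gap in the conservativity half. You assert that a wandering set of positive finite $\mu\times m$-measure ``may be taken of product form $W=A\times I$'' after a standard refinement. That is false: a measurable subset of $X\times\mathbb{R}$ of positive measure need not contain any measurable rectangle $A\times I$ of positive measure (e.g.\ $W=\{(x,t):t-x\bmod 1\in E\}$ for a suitable $E$, by the Steinhaus theorem), and subsets of a wandering set are again wandering, so one cannot enlarge $W$ either. Without a genuine rectangle the fiberwise step that gives $|S_n(y)|\ge|I|$ collapses. The argument is salvageable, but one has to work with an \emph{approximate} rectangle obtained from Lebesgue density: choose a measurable selection $x\mapsto t_x$ of density points of the fibers $W_x$ and a $\delta>0$ so that $m\bigl(W_x\cap(t_x-\delta,t_x+\delta)\bigr)>(1-\eta)\,2\delta$ on a positive-measure set $A$. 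Disjointness of $\tilde T^nW\cap W$ then yields, for $x,T^nx\in A$, a separation of the form
\begin{equation*}
\bigl|S_n(x)-(t_{T^nx}-t_x)\bigr|\ge c(\delta,\eta)>0,
\end{equation*}
rather than $|S_n(x)|\ge|I|$. Setting $a_j=S_{n_j}(x)-t_{T^{n_j}x}$ along the return times $n_j$ of $x$ to $A$, the $a_j$ are pairwise $c$-separated, and after further restricting $A$ so that $|t_x|$ is bounded (or shrinking the range of $t_x$ to length $\ll\delta$) this still forces $\max_{j\le k}|S_{n_j}(x)|$ to grow linearly in $k$, so your Birkhoff contradiction closes as before. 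In short: the skeleton and the final contradiction are sound, but the ``product wandering set'' step needs to be replaced by this density-point refinement; as stated it does not hold.
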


The following similar lemma has not previously been formally published, but arose in discussion between Dr. Vaughn Climenhaga and Dr. Ian Morris on the MathOverflow internet forum, where their proof is adapted from G.~Atkinson's argument.\footnote{Cf.~http://mathoverflow.net/questions/70676/ for the details. We would like to thank those authors for agreeing to the inclusion of this lemma in the present document.}

\begin{CMlem}[Climenhaga and Morris]
Let $T$ be an ergodic measure-preserving transformation of a probability space $(X,\mathscr{F},\mu)$, and let $(f_n)_{n\ge1}$ be a sequence of integrable functions from $X$ to $\mathbb{R}$, which satisfies the subadditivity relation:
\begin{equation*}
f_{n+m}(x)\le f_n(T^mx)+f_m(x)\quad \textrm{for }\mu\textrm{-a.e. }x\in X\textrm{ and }n,m\ge1.
\end{equation*}
Suppose that ${\lim}_{n\to\infty}f_n(x)=-\infty$ for $\mu$-a.e. $x\in X$. Then
${\lim}_{n\to\infty}n^{-1}\int_Xf_n(x)d\mu(x)<0$.
\end{CMlem}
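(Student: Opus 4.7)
The plan is to argue by contradiction, reducing via Kingman's subadditive ergodic theorem to a borderline case and then invoking Atkinson's recurrence theorem (Lemma~\ref{lem2.6}). First, since $T$ is ergodic and $(f_n)$ is integrable and subadditive, Kingman's subadditive ergodic theorem gives $n^{-1} f_n(x) \to \lambda$ for $\mu$-a.e.\ $x\in X$, where
\[
\lambda := \lim_{n\to\infty} \frac{1}{n}\int_X f_n\,d\mu = \inf_{n\ge 1} \frac{1}{n}\int_X f_n\,d\mu \in [-\infty,\infty)
\]
(the second equality by Fekete's lemma applied to the subadditive numerical sequence $(\int f_n\,d\mu)_{n\ge 1}$). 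The hypothesis $f_n(x)\to-\infty$ a.e.\ forces $\lambda\le 0$, for otherwise $n^{-1}f_n$ would be a.e.\ eventually positive. Since $\lambda<0$ is precisely the desired conclusion, the problem reduces to ruling out the boundary case $\lambda=0$, which I assume for contradiction throughout.

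Under $\lambda=0$, the Fekete identity gives $\int f_n\,d\mu\ge 0$ for every $n\ge 1$, and Kingman's theorem supplies both a.e.\ and $L^1$-convergence $n^{-1}f_n\to 0$. Subadditivity iterated from $n=1$ furnishes the pointwise upper bound $f_n\le S_nf_1:=\sum_{k=0}^{n-1}f_1\circ T^k$, and Birkhoff gives $n^{-1}S_nf_1\to\bar g:=\int f_1\,d\mu\ge 0$ a.e. I would split into two subcases. When $\bar g=0$, apply Atkinson's theorem to the additive cocycle $f_1$: for $\mu$-a.e.\ $x$ there is a subsequence $n_k\to\infty$ along which $S_{n_k}f_1(x)\to 0$, whence in particular $f_{n_k}(x)\le S_{n_k}f_1(x)\to 0$. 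To upgrade this one-sided control into a contradiction with $f_n\to-\infty$, combine the recurrence of $S_nf_1$ near $0$ with the integral constraint $\int f_n\ge 0$ through the skew product $\tilde T(x,t)=(Tx,\,t+f_1(x))$ on $(X\times\mathbb{R},\,\mu\otimes\mathrm{Leb})$, which is conservative by Schmidt's theorem; the conservativity of $\tilde T$ together with the subadditive structure then forces $f_n(x)$ itself to return close to $0$ on a set of positive $\mu$-measure. The remaining case $\bar g>0$ is reduced to the one just treated by choosing $m\ge 1$ with $m^{-1}\int f_m\,d\mu$ arbitrarily small (available from $\inf_n n^{-1}\int f_n\,d\mu=0$), replacing $(T,f_1)$ by $(T^m,f_m)$ on each ergodic component of $T^m$, and exploiting $f_{km}\le\sum_{j=0}^{k-1}f_m\circ T^{jm}$ to transfer the contradiction back to the original system.

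The principal obstacle is exactly the asymmetry of subadditivity: $f_n\le S_nf_1$ bounds $f_n$ only from above, so Atkinson's recurrence theorem---which controls the additive cocycle $S_nf_1$ on both sides---yields no direct lower bound on $f_n$. The decisive additional ingredient is the Fekete constraint $\int f_n\,d\mu\ge 0$ automatic from $\lambda=0$: it prevents the nonnegative ``subadditive defect'' $r_n:=S_nf_1-f_n$ from systematically driving $f_n$ downward. Converting this integral information into the pointwise recurrence of $f_n$ near $0$ on a full-measure set, via a conservativity/skew-product argument modelled on Atkinson's original proof of Lemma~\ref{lem2.6}, is the step beyond additive Atkinson and constitutes the real work of the Climenhaga--Morris lemma.
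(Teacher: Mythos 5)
Your route is different from the paper's. The paper subsumes this lemma into Theorem~\ref{thm2.7}, proved by a direct combinatorial chaining argument: choose $\alpha>0$ and a positive-measure set $A_K$ on which $\#\{n:f_n(x)\ge-\alpha\}$ is bounded by $K$; use Birkhoff to get a.e.\ positive-density visits to $A_K$; chain the visit times $k_0<k_1<\dotsm$ so that $f_{k_i}(x)\le f_{k_0}(x)-i\alpha$ with $k_i\le K^2 i$, whence $\lim n^{-1}f_n(x)\le-\alpha/K^2<0$. That argument never reduces to the additive case and does not invoke Atkinson's theorem (nor ergodicity, invertibility, or $f_n\in L^1$). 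Your reduction via Kingman and Fekete to the boundary case $\lambda=0$ is sound, and your subcase $\bar g:=\int f_1\,d\mu=0$ actually closes more simply than you describe, with no skew-product work: from $\lambda=0$ you have $\int f_n\,d\mu\ge0$, while $\int S_nf_1\,d\mu=n\int f_1=0$ and $S_nf_1-f_n\ge0$ a.e.\ by iterated subadditivity; hence $\int(S_nf_1-f_n)\,d\mu=0$, so $f_n=S_nf_1$ a.e.\ for every $n$. Atkinson (applied in the natural extension to restore invertibility) then says $S_nf_1(x)$ returns near $0$ infinitely often a.e., flatly contradicting $f_n\to-\infty$. The subadditive defect is identically zero here, so there is no ``upgrade'' problem at all.

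The genuine gap is the subcase $\bar g>0$, which your proof must rule out but cannot along the proposed lines. Atkinson's theorem and the conservativity of the skew product $\tilde T(x,t)=(Tx,t+f_1(x))$ both require $\int f_1\,d\mu=0$ \emph{exactly}; when $\int f_1>0$ the skew product is dissipative, $S_nf_1(x)\to+\infty$ a.e., so there is no recurrence to exploit, and the vanishing-defect observation fails since $\int(S_nf_1-f_n)\,d\mu=n\bar g-\int f_n$ need not be zero. Your proposed repair---choosing $m$ with $m^{-1}\int f_m\,d\mu$ small and working with $(T^m,f_m)$ on the ergodic components of $T^m$---does not close this: $\int f_m\,d\mu$ is small but need not vanish for any $m$, and on an ergodic component $\nu$ of $T^m$ the value $\int f_m\,d\nu$ is not controlled by $\int f_m\,d\mu$, so no zero-mean additive cocycle is produced. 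This is exactly the asymmetry you correctly flag as the ``real work''; the paper's chaining argument sidesteps it entirely by working with the subadditive sequence directly rather than majorizing it by a zero-mean Birkhoff sum.
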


Now we will introduce a general result of independent interest, which is a generalization of the above lemma and \cite[Theorem~2.4]{Dai-JDE}.

\begin{thm}\label{thm2.7}%%%
Let $T$ be a measure-preserving, not necessarily ergodic, transformation of a probability space $(X,\mathscr{F},\mu)$, and let $(f_n)_{n\ge1}$ be a sequence of measurable functions from $X$ to $\mathbb{R}\cup\{-\infty\}$ with $f_1^+\in L^1(\mu)$, which satisfies the subadditivity relation:
\begin{equation*}
f_{n+m}(x)\le f_n(T^mx)+f_m(x)\quad \textrm{for }\mu\textrm{-a.e. }x\in X\textrm{ and }n,m\ge1.
\end{equation*}
Let $F(x)={\limsup}_{n\to\infty}f_n(x)$ for $x\in X$.
Then the symmetric difference
\begin{equation*}
\left\{x\in X\,|\,F(x)<0\right\}\vartriangle\left\{x\in X\,|\,{\lim}_{n\to\infty}n^{-1}f_n(x)<0\right\}
\end{equation*}
has $\mu$-measure $0$.
\end{thm}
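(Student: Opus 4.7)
The plan is to invoke the $[-\infty,+\infty)$-valued form of Kingman's subadditive ergodic theorem to obtain the $\mu$-a.e.\ limit $\chi(x):=\lim_{n\to\infty}n^{-1}f_n(x)$, which is $T$-invariant, and then to establish the two inclusions $\{\chi<0\}\subseteq\{F<0\}$ and $\{F<0\}\subseteq\{\chi<0\}$ modulo $\mu$-null sets. The substantive part is the second inclusion, which I will prove by adapting the Poincar\'e recurrence/Birkhoff argument underlying the Climenhaga--Morris lemma stated above.

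The first inclusion is immediate: if $\chi(x)<0$ then $f_n(x)/n\to\chi(x)<0$, so $f_n(x)\to-\infty$ and hence $F(x)=-\infty<0$. For the converse, since $\chi$ is $T$-invariant and the case $\chi(x)>0$ is vacuous (there $f_n(x)\to+\infty$, so $F(x)=+\infty$), it suffices to show that on the $T$-invariant set $\{\chi=0\}$ one has $F\ge0$ $\mu$-a.e.

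Suppose, for contradiction, that $\mu(\{\chi=0\}\cap\{F<0\})>0$. Since $\{F<0\}=\bigcup_{N\in\mathbb{N},\,k\ge1}B_{N,1/k}$ where $B_{N,\varepsilon}:=\{x:f_n(x)\le-\varepsilon\ \text{for all}\ n\ge N\}$, some $\varepsilon>0$ and $N\in\mathbb{N}$ satisfy $\mu(\{\chi=0\}\cap B_{N,\varepsilon})>0$; set $B:=B_{N,\varepsilon}$. Let $\mathscr{I}$ denote the $\sigma$-field of $T$-invariant sets and put $\phi(x)=\mathbb{E}[\mathbf{1}_B\mid\mathscr{I}](x)$, so that Birkhoff's ergodic theorem gives $n^{-1}\sum_{k=0}^{n-1}\mathbf{1}_B(T^kx)\to\phi(x)$ $\mu$-a.e. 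Integrating $\phi$ over $\{\chi=0\}$ (which is $\mathscr{I}$-measurable) yields $\mu(\{\chi=0\}\cap B)=\int_{\{\chi=0\}}\phi\,d\mu>0$, so $D:=\{\chi=0\}\cap\{\phi>0\}$ has positive $\mu$-measure.

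Fix $x\in D$ at which both the Birkhoff and Kingman conclusions hold, and extract greedily visit times $k_1<k_2<\cdots$ with $T^{k_i}x\in B$ and $k_{i+1}-k_i\ge N$; since each retained visit can account for at most $N$ consecutive genuine visits in $B$, the Birkhoff limit forces $\liminf_{j\to\infty}(j-1)/k_j\ge\phi(x)/N>0$. Iterated subadditivity now gives
\begin{equation*}
f_{k_j}(x)\le f_{k_1}(x)+\sum_{i=1}^{j-1}f_{k_{i+1}-k_i}(T^{k_i}x)\le f_{k_1}(x)-(j-1)\varepsilon,
\end{equation*}
because each $k_{i+1}-k_i\ge N$ and each $T^{k_i}x\in B$. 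Dividing by $k_j$ and letting $j\to\infty$ produces $\chi(x)\le-\varepsilon\phi(x)/N<0$, contradicting $x\in\{\chi=0\}$. The main obstacle is the careful handling of the non-ergodic setting and of the possible $-\infty$ values of $f_n$: the $\mathscr{I}$-conditional forms of Kingman and Birkhoff replace an ergodic decomposition, and one notes that on $\{\chi=0\}$ every $f_n(x)$ is finite ($\mu$-a.e.), since a single $-\infty$ value of $f_n$ propagates through the subadditivity inequality and forces $\chi=-\infty$, removing such $x$ from $\{\chi=0\}$.
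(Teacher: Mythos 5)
Your proof is correct and follows essentially the same route as the paper: both adapt the Atkinson/Climenhaga--Morris recursive-visit argument, first using Kingman for the trivial inclusion and then producing, for $\mu$-a.e.\ $x$ in a positive-frequency subset of $\{F<0\}$, a recursively chosen sequence $k_1<k_2<\cdots$ of visit times along which subadditivity forces $f_{k_j}(x)\le f_{k_1}(x)-(j-1)\varepsilon$ while Birkhoff guarantees $\liminf_j (j-1)/k_j>0$. The only differences are bookkeeping: you reduce at the outset to the $T$-invariant set $\{\chi=0\}$ and work with $B_{N,\varepsilon}=\{f_n\le-\varepsilon\ \forall n\ge N\}$ and greedy $N$-separated visits, whereas the paper carries out an $\varepsilon/3$-exhaustion of $\{F<0\}$ via $\Lambda_1$ and $A_N=\{\#\{n:f_n\ge-\alpha\}<N\}$ and selects $k_{i+1}$ from the $K$ smallest elements of $L_x\cap(k_i,\infty)$ avoiding the finite bad set $M_{T^{k_i}x}$; your $\phi=\mathbb{E}[\mathbf{1}_B\mid\mathscr{I}]$ is the same object as the paper's Birkhoff average $\mathbf{1}_{A_N}^*$. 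Both variants are valid and of comparable length.
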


\begin{proof}
By the Kingman subadditive ergodic theorem (cf.~\cite[Theorem~10.1]{Wal82}), there exists a measurable function $f\colon X\rightarrow\mathbb{R}\cup\{-\infty\}$ such that
\begin{equation*}
\lim_{n\to\infty}\frac{1}{n}f_n(x)=f(x)\quad\textrm{for }\mu\textrm{-a.e. }x\in X.
\end{equation*}
Let $\Lambda=\{x\in X\,|\,F(x)<0\}$. It is a measurable subset of $X$, since $F(x)$ is measurable.
Then from $\Lambda\supseteq\{x\in X\,|\,f(x)<0\}$, our task is to show that
\begin{equation}\label{eq2.4}
\mu(\{x\in X\,|\,f(x)<0\})\ge\mu(\Lambda).
\end{equation}
Without loss of generality, assume $\mu(\Lambda)>0$; otherwise we need to prove nothing.

Let $\varepsilon>0$ be arbitrarily given with $\varepsilon\ll\mu(\Lambda)$.
Because $F(x)<0$ for all $x\in\Lambda$, we can find a constant $\alpha>0$ for which $\Lambda_1=\{x\in\Lambda\,|\,F(x)\le-2\alpha\}$ is an $\mathscr{F}$-set such that $\mu(\Lambda_1)\ge\mu(\Lambda)-\frac{\varepsilon}{3}$. Since $\varepsilon$ is arbitrary, to prove (\ref{eq2.4}) it is sufficient to show that
\begin{equation}\label{eq2.5}
\mu(\{x\in X\,|\,f(x)<0\})>\mu(\Lambda)-\varepsilon.
\end{equation}

To this end, given $x\in\Lambda_1$, let $M_x=\{n\,|\,f_n(x)\ge-\alpha\}$, and observe that $M_x$ is only finite for all $x\in\Lambda_1$. Thus writing $A_n=\{x\in \Lambda_1\,|\,\#(M_x)<n\}$ where $\#$ stands for the cardinality of a set, we see that $A_n$ is an $\mathscr{F}$-set and that there exists an integer $N>1$ such that $\mu(A_N)\ge\mu(\Lambda_1)-\frac{\varepsilon}{3}$.

From the Birkhoff ergodic theorem, we have that
\begin{equation*}
\liminf_{n\to\infty}\frac{1}{n}\sum_{j=0}^{n-1}1_{A_N}(T^j(x))=1_{A_N}^*(x)\ \textrm{a.e.}\quad \textrm{and}\quad\int_X1_{A_N}^*(x)d\mu(x)=\mu(A_{N}).
\end{equation*}
Let $X_1=\{x\in X\,|\,1_{A_N}^*(x)>0\}$. Since $0\le1_{A_N}^*(x)\le1$, we have $\mu(X_1)\ge\mu(A_N)$. Thus to prove (\ref{eq2.5}), we need only prove that
$f(x)<0$ for all $x\in X_1$.

For that, we fix such an $x\in X_1$ from now on. Since $A_n\subseteq A_{n+1}$ and $1_{A_n}^*\le1_{A_{n+1}}^*$ mod 0 for all $n\ge1$,  we can take an integer $K>N$ such that $1_{A_K}^*(x)>\frac{1}{K}$. Write $L_x=\{k\ge0\,|\,T^k(x)\in A_K\}$. So,
\begin{equation}\label{eq2.6}
\#(L_x\cap[1,n])\ge\frac{n}{K}
\end{equation}
for all sufficiently large $n$.

Let $k_0$ be the smallest element of $L_x$ and define $f_0\equiv 0$ mod 0. We define integers $k_i\in L_x$, for $i=1,2,\dotsm$, recursively with the property that
\begin{equation}\label{eq2.7}
f_{k_i}(x)<f_{k_0}(x)-i\alpha,
\end{equation}
as follows. Let $J_i$ be the $K$ smallest elements of $L_x\cap(k_i,\infty)$. Because $T^{k_i}(x)\in A_K$, there exists $k_{i+1}\in J_i$ such that $k_{i+1}-k_i\not\in M_{T^{k_i}(x)}$. In particular, we have
\begin{equation}
f_{k_{i+1}-k_i}(T^{k_i}(x))<-\alpha.
\end{equation}
Now subadditivity gives
\begin{equation*}
f_{k_{i+1}}(x)\le f_{k_i}(x)+f_{k_{i+1}-k_i}(T^{k_i}(x))<f_{k_0}(x)-(i+1)\alpha.
\end{equation*}
The next observation to make is that by (\ref{eq2.6}), we can obtain
\begin{equation}
k_i\le K^2i\quad \textrm{for all sufficiently large }i
\end{equation}
by considering $n=K^2i$ in (\ref{eq2.6}).
Thus by (\ref{eq2.7}) we have
\begin{equation*}
\frac{1}{k_i}f_{k_i}(x)\le\frac{1}{K^2i}\left(f_{k_0}(x)-i\alpha\right),
\end{equation*}
and letting $i\to\infty$ we obtain $f(x)\le-\frac{\alpha}{K^2}$, as desired.

This completes the proof of Theorem~\ref{thm2.7}.
\end{proof}

\begin{rem}
We note that checking $F(x)<0$ in Theorem~\ref{thm2.7} is relatively easier than checking $\lim_{n\to\infty}f_n(x)=-\infty$. In addition, it should be noted here that since $(X,\mathscr{F},\mu)$ is not necessarily a Lebesgue space, Rohlin's ergodic decomposition theorem does not work for $T$ here and then Theorem~\ref{thm2.7} is not a corollary of the above lemma proposed by Ian Morris. Our proof is mainly adapted from that of \cite[Theorem~2.4]{Dai-JDE} and V.~Climenhaga.
\end{rem}

\begin{rem}\label{rem5}%%%
In Lemma~\ref{lem2.6}, the condition of $f$ belonging to $L^1(\mu)$ is a technical obstruction for us to use Atkinson's theorem; for example, letting $A\colon X\rightarrow \mathbb{R}^{d\times d}$ being measurable with $\sup_{x\in X}\pmb{\|}A(x)\pmb{\|}<\infty$ then the characteristic function $\varphi(x,v)=\log\|A(x)v\|$ defined on the trivial unit-vector bundle $X\times S^{d-1}$ is not necessarily integrable but $\varphi^+(x,v)=\log^+\|A(x)v\|$ and hence $\varphi^+$ is integrable, as in the proof of Theorem~\ref{thm2.13} later.
\end{rem}
%%%%%%%%%%%%%%%%%%%%%%%%%%%%%%%%%%%%%%%%%%%%%%%%%%%%%%%%%%%%%%%%%%%%%%%%%%%%%%%%%%%%%%%%%%%%%%%%%%%
\subsection{The trajectory starting from nonstable direction is far away from zero}\label{sec2.3}%%%

Let $T\colon X\rightarrow X$ be a measure-preserving transformation on a probability space $(X,\mathscr{F},\mu)$ and $\bA\colon\mathbb{Z}_+\times X\rightarrow\mathbb{R}^{d\times d}$ a measurable cocycle driven by $T$, where $\mu$ is not necessarily to be ergodic with respect to $T$.
For any $x\in X$, as in (\ref{eq2.1}) we set
\begin{equation}\label{eq2.10}%%%
V^s(x)=\left\{v\in\mathbb{R}^d\colon\|\bA(t,x)v\|\to0\textrm{ as }t\to\infty\right\}.
\end{equation}
Clearly $\bA(t,x)V^s(x)\subseteq V^s(T^t(x))$ for all $t\ge1$. From Theorem~\ref{thm2.1}, there is no loss of generality in assuming
\begin{equation*}
X\ni x\mapsto V^s(x)\in\mathscr{G}(\mathbb{R}^d)
\end{equation*}
is a measurable function, replacing $X$ by some $T$-invariant $\mathscr{B}$-set of $\mu$-measure $1$ if necessary.

We will utilize the following simple result.

\begin{lem}\label{lem2.8}%%%
For any linear subspace $L\subseteq\mathbb{R}^d$ and  $x\in X$, the following statements are equivalent to each other:
\begin{enumerate}
\item[$\mathrm{(a)}$] $\lim_{t\to\infty}\pmb{\|}\bA(t,x)|L\pmb{\|}=0$.

\item[$\mathrm{(b)}$] $\lim_{t\to\infty}\|\bA(t,x)v\|=0$ for all $v\in L$.
\end{enumerate}
\end{lem}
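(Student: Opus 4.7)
The equivalence is a finite-dimensional linearity fact, not really a dynamical one; the cocycle structure plays no role beyond the fact that for each fixed $t$, the map $v \mapsto \bA(t,x)v$ is linear on $L$. The plan is to prove the two implications separately, with essentially all the work going into (b)$\Rightarrow$(a).

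The direction (a)$\Rightarrow$(b) is immediate: for any $v \in L$ one has
\begin{equation*}
\|\bA(t,x)v\| \le \pmb{\|}\bA(t,x)|L\pmb{\|}\cdot\|v\|,
\end{equation*}
so if the operator norm on $L$ tends to $0$, so does $\|\bA(t,x)v\|$ for every fixed $v$.

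For (b)$\Rightarrow$(a), the idea is to leverage the fact that $L$ is a finite-dimensional subspace of $\mathbb{R}^d$, so its unit sphere $L^\sharp=\{v\in L:\|v\|=1\}$ is compact. Fix a basis $v_1,\dots,v_p$ of $L$ where $p=\dim L$. By equivalence of norms on the finite-dimensional space $L$, there exists a constant $C=C(L)>0$ (independent of $t$ and $x$) such that every $v\in L^\sharp$ admits an expansion $v=\sum_{i=1}^p c_i(v)v_i$ with $|c_i(v)|\le C$. Then by linearity and the triangle inequality,
\begin{equation*}
\|\bA(t,x)v\|\le\sum_{i=1}^p|c_i(v)|\,\|\bA(t,x)v_i\|\le C\sum_{i=1}^p\|\bA(t,x)v_i\|.
\end{equation*}
Taking the supremum over $v\in L^\sharp$ yields
\begin{equation*}
\pmb{\|}\bA(t,x)|L\pmb{\|}\le C\sum_{i=1}^p\|\bA(t,x)v_i\|,
\end{equation*}
and by hypothesis each term on the right tends to $0$ as $t\to\infty$, completing the proof.

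There is really no main obstacle — the whole content is the reduction to a finite basis and the observation that operator norm equals sup over the unit sphere of $L$. (Alternatively one could invoke compactness of $L^\sharp$ directly and argue via uniform convergence of the continuous functions $v\mapsto \|\bA(t,x)v\|$, using that pointwise convergence to $0$ of a family of continuous linear functionals on a compact set forces uniform convergence, but the explicit basis argument above is cleaner and elementary.)
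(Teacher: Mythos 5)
Your proof is correct. Note that the paper states Lemma~\ref{lem2.8} without proof, calling it ``the following simple result,'' so there is no paper argument to compare against; your finite-basis reduction (bound the coefficient functionals on the compact unit sphere of $L$ by equivalence of norms, then apply the triangle inequality and take the supremum) is exactly the standard way to fill in the elementary detail, and the (a)$\Rightarrow$(b) direction is, as you say, immediate from the definition of the restricted operator norm. The only (trivial) edge case is $L=\{0\}$, where both statements hold vacuously under the paper's convention $L^\sharp=\{0\}$; your argument handles $\dim L\ge 1$ and the degenerate case needs no argument.
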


In the following theorem the new element needed to be proved is only the property (\ref{eq2.13}) from the viewpoint of Oselede\v{c}'s multiplicative ergodic theorem.

\begin{thm}\label{thm2.9}%%%%%
Let $\bA\colon \mathbb{Z}_+\times X\rightarrow\mathbb{R}^{d\times d}$ be measurable such that $\log^+\pmb{\|}\bA(1,\cdot)\pmb{\|}\in L^1(\mu)$.
Then there exists a set $B^\prime\in\mathscr{F}$ with $T^t(B^\prime)\subseteq B^\prime$ for all $t\ge1$ and $\mu(B^\prime)=1$, such that for any $x\in B^\prime$,
\begin{gather}
\lambda(x,v)=\lim_{t\to\infty}\frac{1}{t}\log\|\bA(t,x)v\|<0\quad \forall v\in V^s(x)\label{eq2.11}\\
\lambda(x,v)=\lim_{t\to\infty}\frac{1}{t}\log\|\bA(t,x)v\|\ge0\quad \forall v\in \mathbb{R}^d\setminus V^s(x),\label{eq2.12}
\end{gather}
and if $V^s(x)\not=\mathbb{R}^d$ then
\begin{subequations}\label{eq2.13}
\begin{gather}
\limsup_{t\to\infty}\|\bA(t,x)v\|>0\qquad \forall v\in \mathbb{R}^d\setminus V^s(x),\label{eq2.13a}\\
\limsup_{t\to\infty}\pmb{\|}\bA(t,x)\pmb{\|}\ge1.\label{eq2.13b}
\end{gather}\end{subequations}
\end{thm}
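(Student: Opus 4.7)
The plan is to work on the intersection $B'$ of two $T$-invariant full-measure sets: the one on which Oselede\v{c}'s theorem supplies, for every nonzero $v$, a Lyapunov exponent $\lambda(x,v)=\lim_{t\to\infty}t^{-1}\log\|\bA(t,x)v\|$ (allowing the value $-\infty$), and the one provided by Theorem~\ref{thm2.1} on which $x\mapsto V^s(x)$ is $\mathscr{F}$-measurable. On $B'$, both the Oselede\v{c} filtration and the subspace $V^s(\cdot)$ are simultaneously at our disposal.

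On $B'$, the assertions (\ref{eq2.12}) and (\ref{eq2.13a}) are essentially tautologies. Indeed, if $v\notin V^s(x)$, then by the very definition of $V^s$ the nonnegative sequence $\|\bA(t,x)v\|$ fails to converge to $0$, so $\limsup_{t\to\infty}\|\bA(t,x)v\|>0$, which is (\ref{eq2.13a}); the Lyapunov exponent $\lambda(x,v)$ cannot then be strictly negative (otherwise the convergence would be exponential, hence would also hold in the ordinary sense), giving (\ref{eq2.12}).

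The remaining assertions (\ref{eq2.11}) and (\ref{eq2.13b}) will both be deduced from the subadditive non-oscillation Theorem~\ref{thm2.7}, applied however to two different cocycles. For (\ref{eq2.11}) I would set $f_n(x)=\log\pmb{\|}\bA(n,x)|V^s(x)\pmb{\|}$ with the convention $\log 0=-\infty$; the forward invariance $\bA(m,x)V^s(x)\subseteq V^s(T^m(x))$ together with operator-norm submultiplicativity makes $(f_n)$ subadditive, and $f_1^+\le\log^+\pmb{\|}\bA(1,\cdot)\pmb{\|}\in L^1(\mu)$. On the $T$-invariant set where $V^s(x)\neq\{0\}$, Lemma~\ref{lem2.8} yields $f_n(x)\to-\infty$, so $F(x)=-\infty<0$, and Theorem~\ref{thm2.7} forces $\lim_n n^{-1}f_n(x)<0$; this is the top Lyapunov exponent of $\bA|_{V^s(x)}$ and it dominates $\lambda(x,v)$ for every $v\in V^s(x)\setminus\{0\}$, giving (\ref{eq2.11}). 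For (\ref{eq2.13b}) I would apply Theorem~\ref{thm2.7} to $g_n(x)=\log\pmb{\|}\bA(n,x)\pmb{\|}$: whenever $V^s(x)\neq\mathbb{R}^d$, picking any $v\notin V^s(x)$ and invoking (\ref{eq2.12}) gives $\lim_n n^{-1}g_n(x)\ge\lambda(x,v)\ge0$, and the contrapositive of Theorem~\ref{thm2.7} then delivers $\limsup_n g_n(x)\ge0$, i.e.\ $\limsup_n\pmb{\|}\bA(n,x)\pmb{\|}\ge1$. The main obstacle is (\ref{eq2.13b}): one must convert the soft information ``$\pmb{\|}\bA(n,x)\pmb{\|}$ does not decay exponentially'' into the quantitative bound $\pmb{\|}\bA(n,x)\pmb{\|}\ge1$ infinitely often, and this is exactly what the pointwise, non-ergodic Theorem~\ref{thm2.7} provides.
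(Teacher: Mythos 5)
Your proposal is correct and follows essentially the same route as the paper: both apply Theorem~\ref{thm2.7} twice, first to $f_n(x)=\log\pmb{\|}\bA(n,x)|V^s(x)\pmb{\|}$ (with Theorem~\ref{thm2.1} for measurability and Lemma~\ref{lem2.8} for $f_n\to-\infty$) to force the top exponent on $V^s(x)$ to be strictly negative, and then to $g_n(x)=\log\pmb{\|}\bA(n,x)\pmb{\|}$ on $\{x\,:\,V^s(x)\neq\mathbb{R}^d\}$ to obtain (\ref{eq2.13b}). The only cosmetic difference is that the paper phrases the first step as showing the Oselede\v{c} exponentially-stable subspace $E^s(x)$ coincides with $V^s(x)$, whereas you observe directly that (\ref{eq2.12}) and (\ref{eq2.13a}) are already definitional given Oselede\v{c}'s theorem.
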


\begin{proof}
First it easily follows, from Oselede\v{c}'s multiplicative ergodic theorem, that one can find an $\mathscr{F}$-set $B^\prime\subset X$ with $T^t(B^\prime)\subseteq B^\prime$ for all $t\in \mathbb{Z}_+$ and $\mu(B^\prime)=1$ such that there exists an invariant measurable function
\begin{equation}
B^\prime\ni x\mapsto E^s(x)\in\mathscr{G}(\mathbb{R}^d)
\end{equation}
with the properties:
\begin{gather*}
\lambda(x,v)=\lim_{t\to\infty}\frac{1}{t}\log\|\bA(t,x)v\|<0\quad \forall v\in E^s(x)\\
\intertext{and}
\lambda(x,v)=\lim_{t\to\infty}\frac{1}{t}\log\|\bA(t,x)v\|\ge0\quad \forall v\in \mathbb{R}^d\setminus E^s(x).
\end{gather*}
From Lemma~\ref{lem2.8} and (\ref{eq2.10}), it follows that for all $x\in B^\prime$,
\begin{gather}%%%
\lim_{t\to\infty}\pmb{\|}\bA(t,x)|V^s(x)\pmb{\|}=0\label{eq2.15}\\
\intertext{and}E^s(x)\subseteq V^s(x).
\end{gather}
Then from Theorem~\ref{thm2.7} with $f_n(x)=\log\pmb{\|}\bA(t,x)|V^s(x)\pmb{\|}$ for all $x\in X$ and $n\ge1$, it follows that
for $\mu$-a.e. $x\in X$,
\begin{equation*}
\lim_{t\to\infty}\frac{1}{t}\log\pmb{\|}\bA(t,\omega)|V^s(x)\pmb{\|}<0.
\end{equation*}
Therefore $E^s(x)=V^s(x)$ for $\mu$-a.e. $x\in X$. This proves (\ref{eq2.13a}).

The property (\ref{eq2.13b}) follows from (\ref{eq2.13a}) and Theorem~\ref{thm2.7} with $f_n(x)=\log\pmb{\|}\bA(t,x)\pmb{\|}$ and $X=\{x\colon V^s(x)\not=\mathbb{R}^d\}$.

This thus completes the proof of Theorem~\ref{thm2.9}.
\end{proof}

The property (\ref{eq2.13}) of Theorem~\ref{thm2.9} shows that over almost every driving points $x$, for any nonzero initial state $v_0\not\in V^s(x)$, the state trajectory $\bA(t,x)v_0$ would be far away from the equilibrium $0$ as time $t$ passes.

%%%%%%%%%%%%%%%%%%%%%%%%%%%%%%%%%%%%%%%%%%%%%%%%%%%%%%%%%%%%%%%%%%%%%%%%
\subsection{Finer filtration of Matrix-valued cocycles}\label{sec2.4}%%%

To prove the property (f)-(ii) of Theorem~\ref{thm1.1}, we need the following lemma.

\begin{lem}
Let $T$ be a measure-preserving transformation of $(X,\mathscr{F},\mu)$ and $h\colon X\rightarrow\mathbb{R}$ be a random variable.
If $h\circ T\le h$ a.e. then $h\circ T=h$ a.e.
\end{lem}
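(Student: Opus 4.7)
The plan is to exploit the fact that $T$-invariance of $\mu$ forces $h$ and $h\circ T$ to have the same distribution, which combined with the pointwise (a.e.) inequality $h\circ T\le h$ must produce equality.

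First, I would fix a null set $N\in\mathscr{F}$ off which $h(Tx)\le h(x)$ holds pointwise. For every rational $q\in\mathbb{Q}$, set $U_q=\{x\in X:h(x)>q\}$, so that $(h\circ T)^{-1}((q,\infty))=T^{-1}(U_q)$. Outside of $N$ the monotonicity of $h\circ T\le h$ gives the inclusion
\begin{equation*}
T^{-1}(U_q)\setminus N\subseteq U_q\setminus N.
\end{equation*}
Since $T$ preserves $\mu$, we have $\mu(T^{-1}(U_q))=\mu(U_q)$, and since $\mu(N)=0$ the two sets $T^{-1}(U_q)$ and $U_q$ have equal $\mu$-measure. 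For measurable sets of equal measure the symmetric-difference measure satisfies $\mu(T^{-1}(U_q)\vartriangle U_q)=2\mu(U_q\setminus T^{-1}(U_q))$, and the above inclusion forces this symmetric difference to be $\mu$-null. Thus $T^{-1}(U_q)=U_q$ mod $0$ for every rational $q$.

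Let $N_q$ denote the corresponding null symmetric difference, and put $N^*=N\cup\bigcup_{q\in\mathbb{Q}}N_q$, which is still $\mu$-null as a countable union of null sets. For every $x\in X\setminus N^*$ and every $q\in\mathbb{Q}$, the equivalence $h(x)>q\iff h(Tx)>q$ holds. Because $\mathbb{Q}$ is dense in $\mathbb{R}$, we have $h(x)=\sup\{q\in\mathbb{Q}:h(x)>q\}$ and likewise for $h(Tx)$, so $h(x)=h(Tx)$ on $X\setminus N^*$.

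I do not anticipate a real obstacle here; the one subtlety is that $h$ is not assumed integrable (or even finite), so one cannot simply compare $\int h\,d\mu$ with $\int h\circ T\,d\mu$. Passing through the level sets $\{h>q\}$ circumvents this, since these are finite-measure comparisons automatically.
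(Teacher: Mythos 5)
Your proof is correct and uses essentially the same idea as the paper: compare the level sets $\{h>q\}$ and $\{h\circ T>q\}=T^{-1}\{h>q\}$ at rational thresholds, using measure preservation to see they have equal measure and the a.e.\ inequality to see one is essentially contained in the other, hence they agree mod $0$. The only cosmetic difference is that the paper argues by contradiction (if $h\circ T<h$ on a positive-measure set, some rational $r$ separates them on a positive-measure set, contradicting $\mu(\{h>r\})=\mu(\{h\circ T>r\})$), whereas you argue directly, showing $\mu\bigl(\{h>q\}\vartriangle\{h\circ T>q\}\bigr)=0$ for every rational $q$ and then taking the countable union of null sets; this is a fine and slightly more explicit variant of the same proof.
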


\begin{proof}
If the statement fails, there is a rational $r$ with $\mu(\{x\colon h>r>h\circ T\})>0$. Then we have $\mu(\{x\colon h>r\})>\mu(\{x\colon h\circ T>r\})$, but these measures are equal since $T$ is measure-preserving and $T^{-1}(\{x\colon h>r\})\subseteq\{x\colon h\circ T>r\}$, a contradiction, proving the assertion.
\end{proof}

As a result of this lemma, we can easily get the following.

\begin{cor}\label{cor2.11}%%%
Under the same situation of Theorem~\ref{thm2.1}, $\dim V^s(x)=\dim V^s(n\bdot x)$ for $\mu$-a.e. $x\in X$ and for any $n>0$.
\end{cor}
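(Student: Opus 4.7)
The plan is to apply the preceding lemma to the integer-valued random variable
$h(x) := \dim V^{s}(x)$. By Theorem~\ref{thm2.1}, after passing to a full-measure set we may take $x\mapsto V^{s}(x)$ to be $\mathscr{F}/\mathscr{B}_{\mathscr{G}(\mathbb{R}^d)}$-measurable, whence $h$ is $\mathscr{F}$-measurable (the map $L\mapsto\dim L$ is continuous on each component of $\mathscr{G}(\mathbb{R}^d)$). Theorem~\ref{thm2.1} also furnishes the one-sided inequality $\dim V^{s}(x)\ge\dim V^{s}(T(x))$ for $\mu$-a.e.\ $x$, i.e.\ $h\circ T\le h$ a.e.

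The preceding lemma then gives $h\circ T = h$ a.e.\ at once: there exists $N_{1}\in\mathscr{F}$ with $\mu(N_{1})=0$ such that $\dim V^{s}(T(x))=\dim V^{s}(x)$ for every $x\in X\setminus N_{1}$.

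To upgrade this to a statement that holds simultaneously for every $n\ge 1$, I set
\[
N = \bigcup_{k\ge 0} T^{-k}(N_{1}).
\]
Because $T$ is measure-preserving, $\mu(T^{-k}(N_{1}))=\mu(N_{1})=0$ for every $k$, so $\mu(N)=0$. For any $x\in X\setminus N$ and any $n\ge 1$, the orbit points $x, T(x),\dots,T^{n-1}(x)$ all lie outside $N_{1}$, and a finite telescoping
\[
\dim V^{s}(T^{n}(x)) = \dim V^{s}(T^{n-1}(x)) = \cdots = \dim V^{s}(x)
\]
yields the desired equality. The only step worth flagging is the justification that $h\circ T\le h$ a.e.\ follows from Theorem~\ref{thm2.1}; everything after that is a one-line application of the lemma together with the standard measure-preserving union argument, so I expect no real obstacle.
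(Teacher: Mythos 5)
Your proof is correct and follows exactly the route the paper intends (which the paper leaves as ``we can easily get''): apply the preceding lemma to $h(x)=\dim V^{s}(x)$, using the one-sided inequality from Theorem~\ref{thm2.1} to get $h\circ T\le h$ a.e., conclude $h\circ T=h$ a.e., and then discard the countable union $\bigcup_{k\ge0}T^{-k}(N_1)$ to obtain the statement simultaneously for all $n\ge1$.
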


Now we are ready to prove our Theorem~\ref{thm1.1}. We will first prove the following elementary version except the item (d)-(iii).

\begin{thm}\label{thm2.12}%%%
Let $T$ be a measure-preserving transformation of the probability space $(X,\mathscr{F},\mu)$ and $\bA\colon\mathbb{Z}_+\times X\rightarrow\mathbb{R}^{d\times d}$ measurable such that $\log^+\pmb{\|}\bA(1,\cdot)\pmb{\|}\in L^1(\mu)$. Then there exists a set $B\in\mathscr{F}$ with $T(B)\subseteq B$ and $\mu(B)=1$ such that:
\begin{enumerate}
\item[$(\mathrm{a})$] There is a measurable function $s\colon B\rightarrow\mathbb{N}$ with $s\circ T=s$.

\item[$(\mathrm{b})$] If $x$ belongs to $B$ there are $s(x)$ numbers $-\infty=\lambda_1(x)<\lambda_2(x)<\dotsm<\lambda_{s(x)}(x)<\infty$.

\item[$(\mathrm{c})$] There are measurable linear subspaces of $\mathbb{R}^d$:
\begin{equation*}
\varnothing=V^{(0)}(x)\subset V^{(1)}(x)\subset\dotsm\subset V^{(s(x))}(x)=\mathbb{R}^d\quad \forall x\in B.
\end{equation*}
\item[$(\mathrm{d})$] If $x$ belongs to $B$, then
\begin{enumerate}
\item[$(\mathrm{i})$] for $1\le i\le s(x)$,
\begin{equation*}
\lim_{n\to\infty}\frac{1}{n}\log\|\bA(n,x)v\|=\lambda_i(x)\quad \forall v\in V^{(i)}(x)\setminus V^{(i-1)}(x);
\end{equation*}
\item[$(\mathrm{ii})$] for $2\le i\le s(x)$,
\begin{gather*}
\limsup_{n\to\infty}e^{-\lambda_i(x)n}\|\bA(n,x)v\|>0\quad \forall v\in V^{(i)}(x)\setminus V^{(i-1)}(x)\\
\intertext{and}
\limsup_{n\to\infty}e^{-\lambda_i(x)n}\pmb{\|}\bA(n,x)|V^{(i)}(x)\pmb{\|}\ge1.
\end{gather*}
\end{enumerate}

\item[$(\mathrm{e})$] The function $\lambda_i(x)$ is defined and measurable on $\{x\,|\,s(x)\ge i\}$ and $\lambda_i^{}(T(x))=\lambda_i(x)$ on this set.

\item[$(\mathrm{f})$] For any $x\in B$ and all $1\le i\le s(x)$,
\begin{enumerate}
\item[$(\mathrm{i})$] $\bA(1,x)V^{(i)}(x)\subseteq V^{(i)}(T(x))$ and
\item[$(\mathrm{ii})$] $\dim V^{(i)}(T(x))=\dim V^{(i)}(x)$.
\end{enumerate}
\end{enumerate}
\end{thm}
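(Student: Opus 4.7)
The plan is to obtain items (a), (b), (c), (d)-(i), (e), and (f)-(i) directly from Oselede\v{c}'s multiplicative ergodic theorem (together with the Froyland-LLoyd-Quas refinement cited in the introduction), and to derive the two genuinely new items (d)-(ii) and (f)-(ii) by rescaling each level of the Oselede\v{c} filtration by its Lyapunov exponent and then invoking Theorems~\ref{thm2.7}, \ref{thm2.9} and Corollary~\ref{cor2.11} on the invariant subbundles.

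Concretely, fix $2 \le i \le s(x)$ and consider the renormalized cocycle
\[
\tilde\bA_i(n,x) := e^{-\lambda_i(x)n}\bA(n,x),
\]
which is a well-defined measurable cocycle over $T$ because $\lambda_i \circ T = \lambda_i$, and which preserves the invariant subbundle $\{V^{(i)}(x)\}$ by item (f)-(i). The pointwise Lyapunov exponents of $\tilde\bA_i$ on $V^{(i)}(x)$ are $\lambda_j(x) - \lambda_i(x)$ for $j = 1, \dotsc, i$, namely strictly negative when $j < i$ and exactly $0$ when $j = i$. I then identify the stable manifold of $\tilde\bA_i$ inside $V^{(i)}$,
\[
\tilde V_i^s(x) := \left\{v \in V^{(i)}(x) \colon \|\tilde\bA_i(n,x)v\| \to 0\right\},
\]
with $V^{(i-1)}(x)$ modulo a $\mu$-null set: the inclusion $V^{(i-1)}(x) \subseteq \tilde V_i^s(x)$ is immediate from (d)-(i), while Theorem~\ref{thm2.9} applied to $\tilde\bA_i$ forces the reverse inclusion, since every $v \in V^{(i)}(x) \setminus V^{(i-1)}(x)$ has rescaled Lyapunov exponent exactly $0$, not strictly negative. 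Now property (\ref{eq2.13a}) of Theorem~\ref{thm2.9} yields $\limsup_n e^{-\lambda_i(x)n}\|\bA(n,x)v\| > 0$ on $V^{(i)}(x)\setminus V^{(i-1)}(x)$; property (\ref{eq2.13b}) yields $\limsup_n e^{-\lambda_i(x)n}\pmb{\|}\bA(n,x)|V^{(i)}(x)\pmb{\|} \ge 1$; and Corollary~\ref{cor2.11} applied to $\tilde\bA_i|V^{(i)}$ yields $\dim V^{(i-1)}(Tx) = \dim V^{(i-1)}(x)$. Letting $i$ range over $\{2,\dotsc,s(x)\}$---the case $i = s(x)$ being trivial because $V^{(s(x))} = \mathbb{R}^d$---delivers the full items (d)-(ii) and (f)-(ii).

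The main obstacle is two technical issues to be cleared before the above reduction is sound. First, Theorem~\ref{thm2.9} and the underlying Theorem~\ref{thm2.7} require the integrability $\log^+\pmb{\|}\tilde\bA_i(1,\cdot)\pmb{\|} \in L^1(\mu)$, which reduces to $\lambda_i^- \in L^1(\mu)$; in the non-ergodic setting this is not automatic. I would dispose of it by exhausting $X$ by the nested $T$-invariant measurable sets $\Omega_N := \{x : \lambda_i(x) \ge -N\}$, on each of which $\lambda_i^-$ is bounded by $N$ and hence trivially integrable; I would run the argument on each $\Omega_N$ (viewed as a probability space under the normalized restricted measure) and then take $N\to\infty$, noting that $\bigcup_N \Omega_N$ is of full measure because $\lambda_i(x) > -\infty$ for $i \ge 2$. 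Second, Theorems~\ref{thm2.1}, \ref{thm2.9} and Corollary~\ref{cor2.11} are stated for cocycles on $\mathbb{R}^{d\times d}$, whereas I need them on the measurable $T$-invariant subbundle $\{V^{(i)}(x)\}$; this transfer is harmless because their proofs use only the subadditivity of $n\mapsto\log\pmb{\|}\bA(n,x)|L\pmb{\|}$ (Lemma~\ref{lem2.8}) and the measurable-selection lemmas of Section~\ref{sec2.1}, both of which pass through unchanged. A countable intersection of the resulting full-measure $T$-forward invariant sets over all $i$ and $N$ then produces the set $B$ demanded by the statement.
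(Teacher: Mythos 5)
Your proposal is correct and takes essentially the same route as the paper: obtain (a)--(d)-(i), (e), (f)-(i) from Oselede\v{c}'s theorem, then for each $i\ge 2$ pass to the $\lambda_i$-rescaled cocycle $e^{-\lambda_i(x)n}\bA(n,x)$ restricted to the invariant subbundle $V^{(i)}(\cdot)$, identify its stable manifold with $V^{(i-1)}(\cdot)$ modulo null sets, and invoke Theorem~\ref{thm2.9} for (d)-(ii) and Corollary~\ref{cor2.11} for (f)-(ii). The paper phrases this as a descending recursion beginning with $i=s(x)$ on all of $\mathbb{R}^d$ and then restricting to $V^{(s(x)-1)}(x)$, which is the same computation.

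What you add beyond the paper's terse proof are two explicitly flagged technicalities that the paper does not address. The integrability issue is a real one: applying Theorem~\ref{thm2.9} to $e^{-\lambda_i(x)n}\bA(n,x)$ requires $\log^+\pmb{\|}e^{-\lambda_i(\cdot)}\bA(1,\cdot)\pmb{\|}\in L^1(\mu)$, which reduces to $\lambda_i^-\in L^1(\mu)$, and this is \emph{not} automatic from the hypotheses---Kingman's theorem only guarantees $\lambda_{s(\cdot)}^+\in L^1(\mu)$, and $\lambda_i^-$ can be non-integrable on a non-ergodic base. Your exhaustion by the $T$-invariant sets $\Omega_N=\{x:\lambda_i(x)\ge -N\}$ (on each of which the rescaled cocycle satisfies the integrability hypothesis, and whose increasing union is $\{\lambda_i>-\infty\}\supseteq B$ up to a null set) correctly patches this; the one cosmetic remark is that you should take the increasing union over $N$ of the resulting full-measure-in-$\Omega_N$ sets rather than an intersection, and only then intersect over $i$. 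The second point---that Theorems~\ref{thm2.1}, \ref{thm2.9} and Corollary~\ref{cor2.11} are literally stated for cocycles valued in $\mathbb{R}^{d\times d}$ rather than on a measurable invariant subbundle---is likewise a genuine gap in the paper's exposition, and your observation that the proofs run verbatim through the subadditivity of $n\mapsto\log\pmb{\|}\bA(n,x)|L\pmb{\|}$ and the Section~\ref{sec2.1} selection lemmas is the right justification. So your argument is not a different route but a more scrupulously checked version of the paper's own.
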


\begin{proof}
Let $-\infty=\lambda_1(x)<\lambda_2(x)<\dotsm<\lambda_{s(x)}(x)<\infty$ be the Lyapunov exponents of $\bA$ at $x\in B$ in the sense of Oselede\v{c}'s multiplicative ergodic theorem.

First, by applying Theorem~\ref{thm2.9} and Corollary~\ref{cor2.11} to the $\lambda_{s(x)}(x)$-weighted cocycle
$$\bA^{(s(x))}(n,x)=e^{-\lambda_{s(x)}n}\bA(n,x)$$
driven still by $T$, we can see that for $\mu$-a.e. $x\in B$, the property (d)-(ii) of Theorem~\ref{thm2.12} holds for $i=s(x)$, if $s(x)\ge2$.

Next for $\bA^{(s(x)-1)}(n,x)$ restricted to $V^{(s(x)-1)}(x)$, by the same argument we can see that the property (d)-(ii) of Theorem~\ref{thm2.12} holds for $i=s(x)-1$, if $s(x)\ge3$.

Repeating the above argument completes the proof of Theorem~\ref{thm2.12}.
\end{proof}

To prove the item (d)-(iii) of Theorem~\ref{thm1.1}, we need to use Froyland, LLoyd and Quas~\cite[Theorem~4.1]{FLQ} to obtain the following, in which the property (d)-(iii) is the main point.

\begin{thm}\label{thm2.13}%%%
Let $T$ be a measure-preserving invertible transformation of a Polish probability space $(X,\mathscr{F},\mu)$ and assume $\bA\colon\mathbb{Z}_+\times X\rightarrow\mathbb{R}^{d\times d}$ is measurable such that $\log^+\pmb{\|}\bA(1,\cdot)\pmb{\|}\in L^1(\mu)$. Then there exists a set $B\in\mathscr{F}$ with $T(B)=B$ and $\mu(B)=1$ such that:
\begin{enumerate}
\item[$(\mathrm{a})$] There is a measurable function $s\colon B\rightarrow\mathbb{N}$ with $s\circ T=s$.

\item[$(\mathrm{b})$] If $x$ belongs to $B$ there are $s(x)$ numbers $-\infty=\lambda_1(x)<\lambda_2(x)<\dotsm<\lambda_{s(x)}(x)<\infty$.

\item[$(\mathrm{c})$] There are measurable decompositions of $\mathbb{R}^d$ into linear subspaces:
\begin{equation*}
\mathbb{R}^d=E^{(1)}(x)\oplus\dotsm\oplus E^{(s(x))}(x)\quad \forall x\in B,
\end{equation*}
where $E^{(1)}(x)=\{0\}$ may be permitted.

\item[$(\mathrm{d})$] If $x$ belongs to $B$, then
\begin{enumerate}
\item[$(\mathrm{i})$] for $i=1$, $\lim_{n\to\infty}\frac{1}{n}\log\|\bA(n,x)v\|=\lambda_1(x)$ for all $v\in E^{(1)}(x)$;
\item[$(\mathrm{ii})$] for $2\le i\le s(x)$,
$\lim_{n\to\infty}\frac{1}{n}\log\|\bA(n,x)v\|=\lambda_i(x)$ for all $v(\not=0)\in E^{(i)}(x)$;
\item[$(\mathrm{iii})$] for $2\le i\le s(x)$, one can find some $v_i\in E^{(i)}(x)\setminus\{0\}$ such that
\begin{equation*}
\limsup_{n\to\infty}e^{-\lambda_i(x)n}\|\bA(n,x)v_i\|\ge\|v_i\|.
\end{equation*}
\end{enumerate}

\item[$(\mathrm{e})$] The function $\lambda_i(x)$ is defined and measurable on $\{x\,|\,s(x)\ge i\}$ and $\lambda_i^{}(T(x))=\lambda_i(x)$ on this set.

\item[$(\mathrm{f})$] For any $x\in B$ and all $1\le i\le s(x)$,
\begin{enumerate}
\item[$(\mathrm{i})$] $\bA(1,x)E^{(i)}(x)\subseteq E^{(i)}(T(x))$ and
\item[$(\mathrm{ii})$] $\dim E^{(i)}(T(x))=\dim E^{(i)}(x)$.
\end{enumerate}
\end{enumerate}
\end{thm}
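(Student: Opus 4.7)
The plan is to derive items (a), (b), (c), (d)(i)--(ii), (e) and (f) directly from the Froyland--LLoyd--Quas theorem~\cite[Theorem~4.1]{FLQ}, which in the Polish invertible setting produces the $\bA$-equivariant measurable splitting $\mathbb{R}^d=E^{(1)}(x)\oplus\dots\oplus E^{(s(x))}(x)$ with each summand carrying the single Lyapunov exponent $\lambda_i(x)$. The only genuinely new content is (d)(iii). For this, since $(X,\mathscr{F},\mu)$ is a Polish probability space, I first invoke Rohlin's ergodic decomposition and reduce to the case where $\mu$ is $T$-ergodic; then $s$ and each $\lambda_i$ are constants, with $\lambda_i\in\mathbb{R}$ for $i\ge 2$.

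Fix $i\ge 2$ and let $\bA^{(i)}(n,x):=\bA(n,x)|_{E^{(i)}(x)}\colon E^{(i)}(x)\to E^{(i)}(T^n(x))$ denote the restricted cocycle, a linear isomorphism onto its image whose only Lyapunov exponent is $\lambda_i$. Form the unit sphere bundle
\begin{equation*}
\mathbb{S}^{(i)}=\bigsqcup_{x\in B}\bigl\{v\in E^{(i)}(x)\colon\|v\|=1\bigr\},
\end{equation*}
endowed with the skew-product map $\hat{T}^{(i)}(x,v)=\bigl(T(x),\,\bA^{(i)}(1,x)v/\|\bA^{(i)}(1,x)v\|\bigr)$, which is well defined and measurable thanks to the measurable splitting supplied by~\cite{FLQ}. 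Using the Polish structure of $X$ together with compactness of the fibers, construct a $\hat{T}^{(i)}$-invariant Borel probability measure $\hat{\mu}^{(i)}$ on $\mathbb{S}^{(i)}$ with marginal $\mu$ on $X$: select any measurable unit section $x\mapsto v_0(x)\in\mathbb{S}^{(i)}_x$ via Kuratowski--Ryll-Nardzewski, lift $\mu$ through it, push forward by iterates of $\hat{T}^{(i)}$, and extract a Ces\`aro weak-$\ast$ limit via Prokhorov tightness. Each averaging step preserves the base marginal $\mu$ because $T$ does.

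With $\hat{\mu}^{(i)}$ in hand, set $f_n(x,v)=\log\|\bA(n,x)v\|-n\lambda_i$ on $\mathbb{S}^{(i)}$. By the cocycle identity the sequence $(f_n)$ is in fact additive (hence subadditive) along $\hat{T}^{(i)}$, and
\begin{equation*}
f_1^+(x,v)\le\log^+\pmb{\|}\bA(1,x)\pmb{\|}+|\lambda_i|\in L^1(\hat{\mu}^{(i)}),
\end{equation*}
since $\log^+\pmb{\|}\bA(1,\cdot)\pmb{\|}\in L^1(\mu)$ and $\lambda_i$ is a constant. Applying Theorem~\ref{thm2.7} to the measure-preserving system $(\mathbb{S}^{(i)},\hat{\mu}^{(i)},\hat{T}^{(i)})$ with the sequence $(f_n)$ yields $\{F<0\}=\{\lim_n n^{-1}f_n<0\}$ mod $\hat{\mu}^{(i)}$, where $F(x,v)=\limsup_n f_n(x,v)$. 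Since $n^{-1}f_n(x,v)=n^{-1}\log\|\bA(n,x)v\|-\lambda_i\to 0$ for every unit $v\in E^{(i)}(x)$ by the already established (d)(ii), we conclude $F(x,v)\ge 0$ a.e. w.r.t. $\hat{\mu}^{(i)}$, i.e.
\begin{equation*}
\limsup_{n\to\infty}e^{-n\lambda_i}\|\bA(n,x)v\|\ge 1=\|v\|.
\end{equation*}
Disintegrating $\hat{\mu}^{(i)}$ along its marginal $\mu$ shows that for $\mu$-a.e. $x\in B$ there is a unit $v_i\in E^{(i)}(x)$ realizing this bound; rescaling gives (d)(iii) for an arbitrary nonzero vector.

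The main obstacle is the construction and structural properties of the invariant sphere-bundle measure $\hat{\mu}^{(i)}$: measurability of $\mathbb{S}^{(i)}$ and of $\hat{T}^{(i)}$, existence of a measurable unit section for lifting $\mu$, preservation of the base marginal under Ces\`aro averaging, and the Prokhorov/weak-$\ast$ compactness argument in the Polish fiber-bundle context. Once this measure-theoretic scaffolding is in place, the combination of the cocycle identity with Theorem~\ref{thm2.7} and disintegration produces (d)(iii) without further difficulty; the remaining items are bookkeeping applications of~\cite[Theorem~4.1]{FLQ}.
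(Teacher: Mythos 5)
Your proposal is correct and follows essentially the same route as the paper: both pass to the ergodic case, restrict to a single Oseledec summand with exponent normalized to zero, work on the unit-sphere bundle with the induced skew product, produce an invariant measure projecting to $\mu$, apply Theorem~\ref{thm2.7} to the additive cocycle $\log\|\bA(n,x)v\|-n\lambda_i$ (whose positive part is integrable), and disintegrate to extract the vector $v_i$. The only cosmetic difference is that you unwind the existence of the fiberwise-invariant measure (measurable selection, Ces\`aro averaging, Prokhorov) where the paper simply cites Arnold's Theorem~1.5.10.
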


\begin{proof}
Since $(X,\mathscr{F},\mu)$ be a Polish probability space, there is no loss of generality in assuming $T$ is ergodic.
Based on Theorem~\ref{thm2.12} proved above and the improved multiplicative ergodic theorem of Froyland, LLoyd and Quas~\cite{FLQ}, we only need to prove the property (d)-(iii) of Theorem~\ref{thm2.13}. For that, there is no loss of generality is assuming that there is an invariant linear subbundle of $X\times\mathbb{R}^d$
$$
E^c=\bigsqcup_{x\in X}E_x^c
$$
such that $x\mapsto E_x^c$ is measurable, $\dim E_x^c\equiv k$, and for $\mu$-a.e. $x\in X$
\begin{equation*}
\lambda(v):=\lim_{n\to\infty}\frac{1}{n}\log\|\bA(n,x)v\|=0,\quad \forall v\in E_x^c\setminus\{0\}.
\end{equation*}
To prove Theorem~\ref{thm2.13}, it is sufficient to prove that for $\mu$-a.e. $x\in X$, there is a vector $v=v(x)$ in $E_x^c\setminus\{0\}$ such that
\begin{equation*}
\limsup_{n\to\infty}\|\bA(n,x)v\|\ge\|v\|.
\end{equation*}

For that we let $S^{k-1}=\bigsqcup_{x\in X}S_x^{k-1}$ where $S_x^{k-1}:=\left\{v\in E_x^c\colon\|v\|=1\right\}$ and then define a random dynamical system on this random unit sphere bundle
\begin{equation*}
   F\colon S^{k-1}\rightarrow S^{k-1};\quad (x,v)\mapsto\left(Tx, \frac{\bA(1,x)v}{\|\bA(1,x)v\|}\right)
\end{equation*}
driven by the ergodic metric system $(X,\mu,T)$.

For $\mu$-a.e. $x\in X$ and for any $v\in S_x^{k-1}$, $\lambda(v)=0$ implies that $\bA(1,x)v$ is not equal to $0$. Hence $F$ is well defined such that $F_x\colon S_x^{k-1}\rightarrow S_{Tx}^{k-1}$ is (linear) continuous with respect to $v\in S_x^{k-1}$, for $\mu$-a.e. $x\in X$. We need to note that since $Tx$ is only measurable in $x$, $F$ is not necessarily continuous on the bundle $S^{k-1}$.

Let $\mathcal{I}_\mu(F)$ be the set of all $F$-invariant Borel probability measures on $S^{k-1}$ covering $\mu$ by the natural projection $\pi\colon (x,v)\mapsto x$ from $S^{k-1}$ onto $X$.
By the standard theorem of existence of invariant measures (cf., e.g., \cite[Theorem~1.5.10]{Arn}), $\mathcal{I}_\mu(F)$ is a non-void, compact and convex set.

Since $\bA(1,x)$ is measurable in $x$, we can define a measurable characteristic function
\begin{equation*}
    \varphi\colon S^{k-1}\rightarrow \mathbb{R};\quad (x,v)\mapsto\log\|\bA(1,x)v\|\;\forall (x,v)\in S^{k-1},
\end{equation*}
which is such that
\begin{equation*}
    \log\|\bA(n,x)v\|=\sum_{i=0}^{n-1}\varphi(F^i(x,v))\quad\forall (x,v)\in S^{k-1}.
\end{equation*}
Let $\tilde{\mu}\in\mathcal{I}_\mu(F)$ be arbitrarily given. By $\{\tilde{\mu}_x\}_{x\in X}$ we denote the standard disintegration of $\tilde{\mu}$ given $\mu$ via the projection $\pi$. Since $\varphi^+(x,v)\le\log^+\|\bA(1,x)\|$ for all $(x,v)\in S^{k-1}$, we have
\begin{equation*}
\int_{S^{k-1}}\varphi^+d\tilde{\mu}=\int_X\left(\int_{S_x^{k-1}}\varphi^+(x,v)d\mu_x(v)\right)d\mu(x)\le\int_X\log^+\|\bA(1,x)\|d\mu(x)<\infty.
\end{equation*}
Hence $\varphi^+$ belongs to $L^1(\tilde{\mu})$ but $\varphi$ does not need to be in $L^1(\tilde{\mu})$.

Then applying Theorem~\ref{thm2.7} with $f_n(x,v)=\sum_{i=0}^{n-1}\varphi(F^i(x,v))$ for all $(x,v)\in S^{k-1}$, it follows that
\begin{equation*}
    \limsup_{n\to\infty}\log\|\bA(n,x)v\|\ge0\quad\textrm{and thus}\quad \limsup_{n\to\infty}\|\bA(n,x)v\|\ge1\quad\tilde{\mu}\textrm{-a.e. }(x,v)\in S^{k-1}.
\end{equation*}
This completes the proof of Theorem~\ref{thm2.13}.
\end{proof}

We note here that in the proof of Theorem~\ref{thm2.13}, since the characteristic function $\varphi$ is not necessarily to be $\tilde{\mu}$-integrable, we cannot use Atkinson's Lemma~\ref{lem2.6}; see Remark~\ref{rem5}.

Finally, combining Theorems~\ref{thm2.12} and \ref{thm2.13} we can complete the proof of Theorem~\ref{thm1.1}.

\begin{proof}[Proof of Theorem~\ref{thm1.1}]
According to Theorem~\ref{thm2.12}, we only need to prove the item (d)-(iii) of Theorem~\ref{thm1.1}. However, this property can also be easily induced from Theorem~\ref{thm2.13} by using the natural extension of the cocycle $\bA$; see, e.g., \cite[Section~6.2]{Dai-07}. 
\end{proof}
%%%%%%%%%%%%%%%%%%%%%%%%%%%%%%%%%%%%%%%%%%%%%%%%%%%%%
%%%%%%%%%%%%%%%%%%%%%%%%%%%%%%%%%%%%%%%%%%%%%%%%%%%%%

\section{Conditional stability of linear random processes}\label{sec3}%%%
In this section, we shall give an application of Theorem~\ref{thm1.1} to the study of conditional stability of linear random system.

Suppose $\X=\{x_{\bdot}\colon\mathbb{Z}_+\rightarrow \bS\}$ is the Cartesian product $\bS^{\mathbb{Z}_+}$ of a fixed measurable space $(\bS,\mathcal{S})$. Here $\X$ possesses a natural $\sigma$-algebra $\mathcal{S}^{\mathbb{Z}_+}$ generated by cylindrical sets of the form
\begin{equation}\label{eq3.1}
A=\left\{x_{\bdot}\in \X\,|\,x_{i_1}\in C_1,\dotsc,x_{i_r}\in C_r\right\}
\end{equation}
where $1\le r<\infty, 0\le i_1<\dotsm<i_r<\infty$ are integers and $C_1,\dotsc,C_r\in\mathcal{S}$. Suppose $\mu$ be a probability measure on $\mathcal{S}^{\mathbb{Z}_+}$ and $\mathcal{S}_\mu^{\mathbb{Z}_+}$ is the completion of $\mathcal{S}^{\mathbb{Z}_+}$ with respect to $\mu$. In probability theory the triple $\bxi=(\X,\mathcal{S}_\mu^{\mathbb{Z}_+},\mu)$ is said to be a \textit{discrete-time random process}, where $\X$ is the sample-path space and $\bS$ the state space of this process.

If, for any set $A$ of the form (\ref{eq3.1}), the measure $\mu(\{x_{\bdot}\in \X\,|\,x_{i_1+n}\in C_1,\dotsc,x_{i_r+n}\in C_r\})$ does not depend upon $n$, $0\le n<\infty$, then the process $\bxi$ is called \textit{stationary}. Let us express the stationary condition in another way. Define the shift transformation
\begin{equation}\label{eq3.2}
T\colon \X\rightarrow \X;\quad x_{\bdot}=(x_0,x_1,x_2,\dotsc)\mapsto x_{\bdot+1}=(x_1,x_2,x_3,\dotsc).
\end{equation}
Then if $A$ is a set of the form (\ref{eq3.1}) we have $T^{-1}(A)=\{x_{\bdot}\in \X\,|\,x_{i_1+1}\in C_1,\dotsc,x_{i_r+1}\in C_r\}$, and the stationarity condition may be written in the form $\mu(T^{-1}(A))=\mu(A)$. Since $\mu$ is uniquely determined by its values on cylindrical sets, stationarity condition means that the shift transformation $T$ preserves $\mu$, i.e., $T$ is a measure-preserving transformation of the probability space $(\X,\mathcal{S}_\mu^{\mathbb{Z}_+},\mu)$.

The followings are three important stationary random processes which often serve as our driving dynamical systems.

\begin{example}[Bernoulli process]\label{exa3.1}%%%
Let $\bxi$ be the Cartesian product
$$
(\X,\mathcal{S}_\mu^{\mathbb{Z}_+},\mu)=\prod_{n=0}^{\infty}(S_n,\mathcal {S}_n,\varrho_n),
$$
where $(S_n,\mathcal {S}_n,\varrho_n)=(\bS,\mathcal {S},\varrho)$ is a probability space. The measure $\mu=\bigotimes_{n=0}^\infty \varrho_n$ is the countable product-measure generated by the measure $\varrho$. Then the shift transformation $T$ is ergodic and mixing (cf.~\cite[Theorem~8.1]{CFS}).
\end{example}

\begin{example}[Markov process]\label{exa3.2}%%%
A \textit{stochastic operator} on the state space $(\bS,\mathcal{S})$ is a function $\P(s,C)$ of the variables $s\in\bS, C\in\mathcal{S}$ with the following properties:
\begin{enumerate}
\item[(1)] $\P(s,\bdot)$, for any fixed $s\in \bS$, is a probability measure on the measurable space $(\bS,\mathcal{S})$;
\item[(2)] $\P(s,C)$, for any fixed $C\in\mathcal{S}$, is a measurable function on $\bS$.
\end{enumerate}
A probability measure $\nu$ on $(\bS,\mathcal{S})$ is said to be an \textit{invariant measure for the stochastic operator} $\P$ if for any $C\in\mathcal{S}$ we have
$$
\nu(C)=\int_{\bS}\P(s,C)d\nu(s).
$$
Given a stochastic operator $\P$ and an invariant probability measure $\nu$, we can define a measure $\mu_{\nu,\P}$ on the sample-path space $(\X,\mathcal{S}^{\mathbb{Z}_+})$ in the following way: First for the cylindrical sets
$$
A=\left\{x_{\bdot}\in \X\,|\,x_{i}\in C_0,x_{i+1}\in C_1,\dotsc,x_{i+r}\in C_r\right\}, \quad \textrm{where }r\ge 0, i\ge0, C_0,\dotsc,C_r\in\mathcal{S},
$$
we set
$$
\mu_{\nu,\P}(A)=\int_{C_0}d\nu(x_i)\int_{C_1}\P(x_i, dx_{i+1})\dotsm\int_{C_r}\P(x_{i+r-1},dx_{i+r}).
$$
Then, using the Kolmogorov extension theorem, uniquely extend $\mu_{\nu,\P}$ to the entire $\sigma$-algebra $\mathcal{S}^{\mathbb{Z}_+}$ and then to $\mathcal{S}_{\mu_{\nu,\P}}^{\mathbb{Z}_+}$. The invariance of $\nu$ implies that the probability measure $\mu_{\nu,\P}$, which is called a Markovian measure, is stationary. In this case, $\bxi=(\X,\mathcal{S}_{\mu_{\nu,P}}^{\mathbb{Z}_+},\mu_{\nu,\P})$ is called a \textit{Markov process}.

We should note that the ergodic properties of a Markov process $\bxi$ may differ for various initial distribution $\nu$ and stochastic operator $\P$. It is easy to construct examples of non-ergodic Markov processes even if the state space $\bS$ is finite.
\end{example}

Let $\mathscr{B}_{\mathbb{R}}$ be the standard Borel $\sigma$-algebra of $\mathbb{R}$. The following is just the discretization of the classical stationary $1$D-Brownian process.

\begin{example}[$\mathrm{1D}$-Brownian motion]\label{exa3.3}%%%
Let $\X=\mathbb{R}^{\mathbb{Z}_+}$ and $\mathscr{B}=\mathscr{B}_{\mathbb{R}}^{\mathbb{Z}_+}$. We now define a stochastic operator $\P$ on $(\mathbb{R},\mathscr{B}_{\mathbb{R}})$ as follows: For any $y\in\mathbb{R}$ and $C\in\mathscr{B}_\mathbb{R}$, let
$$
P(y,C)=\int_C\frac{1}{\sqrt{2\pi}}e^{-\frac{(z-y)^2}{2}}dz.
$$
Let $\nu$ be a probability measure $(\mathbb{R},\mathscr{B}_\mathbb{R})$, which is invariant for $\P$. Then as in Example~\ref{exa3.2}, we can get a Markovian measure $\mu_{\nu,\P}$. In this case, $\bxi=(\mathbb{R}^{\mathbb{Z}_+},\mathscr{B},\mu_{\nu,\P})$ is called a discrete-time 1-dimensional \textit{Brownian motion}.
\end{example}

From now on, we let $A\colon \bS\rightarrow\mathbb{R}^{d\times d}$ be a matrix-valued measurable function, which is bounded, i.e., $\pmb{\|}A(s)\pmb{\|}\le\beta$ for all $s\in\bS$ for some constant $\beta$. Then based on a stationary random process $\bxi=(\X,\mathcal{S}_\mu^{\mathbb{Z}_+},\mu)$ with the state space $(\bS,\mathcal{S})$, it gives rise to a linear random system:
\begin{equation}\label{eq3.3}%%%
\bA_\bxi\colon \mathbb{Z}_+\times\X\rightarrow \mathbb{R}^{d\times d};\quad (n,x_{\bdot})\mapsto\begin{cases}A(x_{n-1})\dotsm A(x_0)& \textrm{if }n\ge1,\\ I_{d\times d} & \textrm{if }n=0.\end{cases}
\end{equation}
It is just a linear cocycle driven by the shift transformation $T$ as in (\ref{eq3.2}).

We will consider the following two kinds of stability of $\bA_{\bxi}$, which may be regarded as the random versions of \cite{SU94, CRS99}.

\begin{defn}\label{def3.4}%%%
Let $\mathbb{L}$ be a linear subspace of $\mathbb{R}^d$. The linear random system $\bA_\bxi$ is said to be:
\begin{itemize}
\item \textit{$\mathbb{L}$-conditionally Lyapunov stable}, if $\mu(\left\{x_{\bdot}\in \X\,|\,\lim_{n\to\infty}\pmb{\|}\bA_\bxi(n,x_{\bdot})|\mathbb{L}\pmb{\|}=0\right\})>0$;
\item \textit{$\mathbb{L}$-conditionally exponentially stable}, if $\mu(\left\{x_{\bdot}\in \X\,|\,\lim_{n\to\infty}n^{-1}\log\pmb{\|}\bA_\bxi(n,x_{\bdot})|\mathbb{L}\pmb{\|}<0\right\}>0$.
\end{itemize}
\end{defn}

These two types of stability seem, at the first glance, to be different from each other even in the $1$-dimensional case as is shown by Example~\ref{exa2.5} in Section~\ref{sec2}.

Conceptually the conditional Lyapunov stability of $\bA_\bxi$ is easier to check than the conditional exponential stability; but the latter is more popular than the former in the theory of multi-rate sampled-data control systems, multi-modal linear control systems, numerical calculus, and for some control optimization problems; see, for example, \cite{Stan79, BCS88, SU94, GR97, CRS99, Sun04, BD, Sun06, LD06, LD07, Dai-JDE} and so on. An explicit simple example of application is given as follows:

Let $V\colon \mathbb{R}^d\rightarrow[0,\infty)$ be a continuous function, which is locally Lipschitz at the origin zero, that is, $V(u)\le\gamma\|u\|$ for all $u\in\mathbb{R}^d$ with $\|u\|\le\delta$, for some $\delta>0$. Associated to $V$ we consider the infinite-time cost index of $(\bxi,A)$ given on $\bS$ by
\begin{equation*}
\mathscr{L}(u,x_{\bdot})=\sum_{n=0}^{+\infty}V(\bA_\bxi(n,x_{\bdot})u),\quad \forall u\in \mathbb{L}\textrm{ and }x_{\bdot}\in \X.
\end{equation*}
Because
$$0\le \mathscr{L}(u,x_{\bdot})\le\Delta_{x_{\bdot}}+\gamma\|u\|\sum_{n=N_{x_{\bdot}}}^{+\infty}\pmb{\|}\bA_\bxi(n,x_{\bdot})|\mathbb{L}\pmb{\|},$$
if $\bA_\bxi$ is $\mathbb{L}$-conditionally exponentially stable $\mathscr{L}(u,x_{\bdot})$ is finite for some $x_{\bdot}$ of $\mu$-positive measure. Then we can study the \textit{optimal cost} of $\bxi$ associated to $\mathscr{L}$ at $u\in \mathbb{L}$ that may be defined as $J(u)=\mu\textrm{-ess.}\inf_{x_{\bdot}\in\X}\mathscr{L}(u,x_{\bdot})$.

So how to characterize the exponential stability of $\bA_\bxi$ from the Lyapunov stability has become more and more interesting recently.
Now we will prove the following equivalent relationship using Theorem~\ref{thm1.1}, which generalizes \cite[Theorem~B$^\prime$]{Dai-JDE}.

\begin{thm}\label{thm3.5}%%%
Given any linear subspace $\mathbb{L}\subseteq \mathbb{R}^d$ and based on a stationary random process $\bxi$, $\bA_\bxi$ is $\mathbb{L}$-conditionally Lyapunov stable if and only if $\bA_\bxi$ is $\mathbb{L}$-conditionally exponentially stable.
\end{thm}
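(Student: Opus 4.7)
The ``if'' direction is immediate, since exponential decay of the operator norm forces it to tend to zero. I therefore focus on the nontrivial ``only if'' direction, and the plan is to leverage the refined multiplicative ergodic theorem (Theorem~\ref{thm1.1}) applied to the cocycle $\bA_\bxi$ driven by the shift $T$ on $(\X,\mathcal{S}_\mu^{\mathbb{Z}_+},\mu)$. Note that the integrability hypothesis $\log^+\pmb{\|}\bA_\bxi(1,\cdot)\pmb{\|}\in L^1(\mu)$ is automatic from $\pmb{\|}A(s)\pmb{\|}\le\beta$.

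First I would reformulate Lyapunov stability in geometric terms. By Lemma~\ref{lem2.8}, the hypothesis $\mu(\Omega_0)>0$, where
\begin{equation*}
\Omega_0=\left\{x_\bdot\in\X\colon\lim_{n\to\infty}\pmb{\|}\bA_\bxi(n,x_\bdot)|\mathbb{L}\pmb{\|}=0\right\},
\end{equation*}
is equivalent to $\mathbb{L}\subseteq V^s(x_\bdot)$ on $\Omega_0$ (with $V^s$ as in (\ref{eq2.10}), which is measurable in $x_\bdot$ by Theorem~\ref{thm2.1}). Next, on the full-measure invariant set $B$ from Theorem~\ref{thm1.1}, define $k(x_\bdot)$ to be the largest index with $\lambda_{k(x_\bdot)}(x_\bdot)<0$ (set $V^{(k(x_\bdot))}(x_\bdot)=\{0\}$ if no such index exists). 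The crux is the identification $V^s(x_\bdot)=V^{(k(x_\bdot))}(x_\bdot)$ on $B$: the inclusion $\supseteq$ is clear from Theorem~\ref{thm1.1}(d)(i) since every vector in $V^{(k(x_\bdot))}(x_\bdot)$ has Lyapunov exponent at most $\lambda_{k(x_\bdot)}(x_\bdot)<0$, while the reverse inclusion uses precisely the new Theorem~\ref{thm1.1}(d)(ii): any $v\in V^{(i)}(x_\bdot)\setminus V^{(i-1)}(x_\bdot)$ with $i>k(x_\bdot)$ satisfies $\lambda_i(x_\bdot)\ge 0$ and $\limsup_n e^{-\lambda_i(x_\bdot)n}\|\bA_\bxi(n,x_\bdot)v\|>0$, which prevents $\|\bA_\bxi(n,x_\bdot)v\|\to 0$.

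Having established $\mathbb{L}\subseteq V^{(k(x_\bdot))}(x_\bdot)$ on $\Omega:=B\cap\Omega_0$, I next upgrade pointwise exponent control to an operator-norm estimate. Since $\pmb{\|}\bA_\bxi(n,x_\bdot)|\mathbb{L}\pmb{\|}\le\pmb{\|}\bA_\bxi(n,x_\bdot)|V^{(k(x_\bdot))}(x_\bdot)\pmb{\|}$, it suffices to bound the latter. For each fixed $x_\bdot\in\Omega$, pick a basis $e_1,\dots,e_m$ of $V^{(k(x_\bdot))}(x_\bdot)$ and note that any unit vector in this subspace has coordinates bounded by a constant $C=C(x_\bdot)$, which gives
\begin{equation*}
\pmb{\|}\bA_\bxi(n,x_\bdot)|V^{(k(x_\bdot))}(x_\bdot)\pmb{\|}\le Cm\,\max_{1\le i\le m}\|\bA_\bxi(n,x_\bdot)e_i\|.
\end{equation*}
By Theorem~\ref{thm1.1}(d)(i), each $n^{-1}\log\|\bA_\bxi(n,x_\bdot)e_i\|$ converges to some $\lambda_{j_i}(x_\bdot)\le\lambda_{k(x_\bdot)}(x_\bdot)<0$, so
\begin{equation*}
\limsup_{n\to\infty}\tfrac{1}{n}\log\pmb{\|}\bA_\bxi(n,x_\bdot)|\mathbb{L}\pmb{\|}\le\lambda_{k(x_\bdot)}(x_\bdot)<0
\end{equation*}
for every $x_\bdot\in\Omega$. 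Since $\mu(\Omega)=\mu(\Omega_0)>0$, this is exactly $\mathbb{L}$-conditional exponential stability.

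The main obstacle is the identification $V^s(x_\bdot)=V^{(k(x_\bdot))}(x_\bdot)$: Oselede\v{c}'s classical theorem alone cannot rule out vectors whose Lyapunov exponent is $0$ yet still converge to zero, and this is precisely the phenomenon exhibited in Example~\ref{exa2.5}. It is the new property (d)(ii) in Theorem~\ref{thm1.1}---the non-oscillatory ``far-from-zero'' behavior proved via the subadditive return lemma (Theorem~\ref{thm2.7})---that closes this gap and thereby makes the equivalence possible.
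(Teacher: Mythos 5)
Your proof is correct and follows essentially the same route as the paper: identify the Lyapunov filtration index below which exponents are negative, use the new property (d)(ii) of Theorem~\ref{thm1.1} to show that $\mathbb{L}$ must lie inside the corresponding $V^{(r(x_\bdot))}(x_\bdot)$, and then read off exponential decay from (d)(i). Your treatment is in fact slightly more explicit than the paper's in two places---you spell out the identification $V^s=V^{(k)}$ and you supply the finite-dimensional basis argument upgrading pointwise exponent control to an operator-norm estimate on $\mathbb{L}$, a step the paper's proof leaves implicit; the only cosmetic slip is the fallback ``set $V^{(k)}=\{0\}$ if no such index exists,'' which is vacuous here since the paper's convention $\lambda_1=-\infty$ guarantees $k\ge1$.
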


\begin{proof}
We need only prove the necessity. Assume $\bA_\bxi$ is $\mathbb{L}$-conditionally Lyapunov stable; i.e., if write
$$
\varLambda=\left\{x_{\bdot}\in \X\,|\,{\lim}_{n\to\infty}\pmb{\|}\bA_\bxi(n,x_{\bdot})|\mathbb{L}\pmb{\|}=0\right\}
$$
then $\mu(\varLambda)>0$. For $\mu$-a.e. $x_{\bdot}\in\varLambda$, let $-\infty=\lambda_1(x_{\bdot})<\dotsm<\lambda_{r(x_{\bdot})}(x_{\bdot})<\dotsm<\lambda_{s(x_{\bdot})}(x_{\bdot})<\infty$ be the Lyapunov exponents of $\bA_\bxi$ at the base point $x_{\bdot}$ given by Theorem~\ref{thm1.1}.

If $\lambda_{s(x_{\bdot})}(x_{\bdot})$ is less than $0$, then $\lim_{n\to\infty}n^{-1}\log\pmb{\|}\bA_\bxi(n,x_{\bdot})|\mathbb{L}\pmb{\|}<0$ from the property (d)-(i) of Theorem~\ref{thm1.1}. So from now on, without loss of generality we may assume $\lambda_{s(x_{\bdot})}(x_{\bdot})\ge0$ and let $\lambda_{r(x_{\bdot})}(x_{\bdot})<0\le\lambda_{r(x_{\bdot})+1}(x_{\bdot})$ for $\mu$-a.e. $x_{\bdot}\in\varLambda$. Then the property (d)-(ii) of Theorem~\ref{thm1.1} implies that $\mathbb{L}\subseteq V^{(r(x_{\bdot}))}(x_{\bdot})$ for $\mu$-a.e. $x_{\bdot}\in\varLambda$.
This thus completes the proof of Theorem~\ref{thm3.5}.
\end{proof}

The most interesting case of Theorem~\ref{thm3.5} is that $\bxi$ is a Markov process or a Brownian motion in the theory of control and optimizations.

We note that because $\mathbb{L}\subset\mathbb{R}^d$ is not necessarily $\bA_\bxi$-invariant, $f_n(x_{\bdot})=\log\pmb{\|}\bA_\bxi(n,x_{\bdot})|\mathbb{L}\pmb{\|}$ is not necessarily to be subadditive on $\X$ with respect to the shift transformation $T\colon \X\rightarrow \X$ and then we can not directly employ Theorem~\ref{thm2.7} here. In addition for many control optimization problems $\mathbb{L}\neq\mathbb{R}^d$ because of constraint conditions.

%%%%%%%%%%%%%%%%%%%%%%%%%%%%%%%%%%%%%%%%%%%%%%%%%%%%%%%%%%%%%%%%%%%%%
%%%%%%%%%%%%%%%%%%%%%%%%%%%%%%%%%%%%%%%%%%%%%%%%%%%%%%%%%%%%%%%%%%%%%

\subsection*{\textbf{Acknowledgments}}%
This work was supported in part by National Science Foundation of China Grant $\#$11271183, PAPD of Jiangsu Higher Education Institutions.

The author would like to thank the anonymous referees for many helpful suggestions.
%%%%%%%%%%%%%%%%%%%%%%%%%%%%%%%%%%%%%%%%%%%%%%%%%%%%%%%%%%%%%%%%%%%%%%%%%%
%%%%%%%%%%%%%%%%%%%%%%%%%%%%%%%%%%%%%%%%%%%%%%%%%%%%%%%%%%%%%%%%%%%%%%%%%%
\medskip

\end{document}